\newcommand{\RR}{\mathbb R}
\newcommand{\NN}{\mathbb N}
\newcommand{\cA}{\mathcal{A}}
\newcommand{\cL}{\mathcal{L}}
\newcommand{\cM}{\mathcal{M}}
\newcommand{\cH}{\mathcal{H}}
\newcommand{\cU}{\mathcal{U}}
\newcommand{\eps}{\varepsilon}
\newcommand{\supp}{\operatorname{supp}}
\newcommand{\mbf}[1]{\boldsymbol{#1}}
\newcommand{\To}{\longrightarrow}
\newcommand{\abs}[1]{\left\vert#1\right\vert}
\newcommand{\absn}[1]{\vert#1\vert}
\newcommand{\absb}[1]{\big\vert#1\big\vert}
\newcommand{\absB}[1]{\Big\vert#1\Big\vert}
\newcommand{\norm}[1]{\left\|#1\right\|}
\newcommand{\dist}[2] {\operatorname{dist}\left(#1;#2\right)}
\newcommand{\mysetr}[2] {\left\{#1\,\left|\,#2\right.\right\}}
\newcommand{\mysetb}[2] {\big\{#1\,\big|\,#2\big\}}
\newcommand{\be}{\begin{equation}}
\newcommand{\ee}{\end{equation}}
\newcommand{\bald}{\begin{aligned}}
\newcommand{\eald}{\end{aligned}}
\newtheorem{theorem}{Theorem}
\newtheorem{thm}[theorem]{Theorem}
\newtheorem{cor}[theorem]{Corollary}
\newtheorem{lem}[theorem]{Lemma}
\newtheorem{prop}[theorem]{Proposition}
\theoremstyle{definition}
\newtheorem{defn}[theorem]{Definition}
\newtheorem{ex}[theorem]{Example}
\theoremstyle{remark}
\newtheorem{rem}[theorem]{Remark}
\renewcommand{\proofname}{\bfseries{Proof}}
\renewenvironment{proof}[1][\proofname]{\par
  \normalfont
  \trivlist
  \item[\hskip\labelsep\itshape
    \bfseries{#1.}]\ignorespaces
}{%
  \qed\endtrivlist
}
\newlength{\alignskip}
\newcommand{\wstar}{{\stackrel{*}{\rightharpoonup}}\,}
\renewcommand{\d }{d}
\author{Barbora Bene\v{s}ov\'{a}\footnote{Department of Mathematics I, RWTH Aachen University, D-52056 Aachen, Germany} ,  Stefan Kr\"{o}mer\footnote{Math.~Inst., Universit\"{a}t zu K\"{o}ln, 50923 K\"{o}ln, Germany;  skroemer@math.uni-koeln.de} ,   Martin Kru\v{z}\'{\i}k\footnote{Institute of Information Theory and Automation of the ASCR, Pod vod\'{a}renskou
v\v{e}\v{z}\'{\i}~4, CZ-182~08~Praha~8, Czech Republic and
Faculty of Civil Engineering, Czech Technical
University, Th\'{a}kurova 7, CZ-166~ 29~Praha~6, Czech Republic}
}
\title{Boundary effects and weak$^*$ lower semicontinuity for signed integral functionals on $\mathrm{BV}$}%
\begin{document}
\date{}
\maketitle
\begin{abstract}
We characterize  lower semicontinuity of integral functionals with respect to weak$^*$ convergence in $\mathrm{BV}$, including  integrands whose negative part has  linear growth. In addition, we allow for  sequences without a fixed trace at the boundary. In this case,  both the integrand and the shape of the boundary play a key role.   This is made precise in our newly found condition -- quasi-sublinear growth from below at points of the boundary --  which   compensates for possible concentration effects generated by the sequence.     Our work extends  some recent results by J.~Kristensen and F.~Rindler (Arch.~Rat.~Mech.~Anal.~197 (2010), 539--598 and  Calc.~Var.~37 (2010), 29--62). 

\smallskip
\noindent
{\bf Mathematics Subject Classification (2000):} 49J45, 26B30, 52A99
\end{abstract}

\section{Introduction}\label{sec:intro}

 We consider an integral functional of the form
\be
	F(u):=\int_\Omega \d f(x,Du),~~u\in \mathrm{BV}(\Omega;\RR^M),
	\label{defF}
\ee
where $x$ is the variable of integration and
\[
	\d f(x,Du):=f(x,\nabla u(x)) \d x+f^\infty\Big(x,\frac{dDu}{d\abs{Du}}(x)\Big) \d \abs{D^su}(x).
\]
Here, we assume that the energy density $f:\overline{\Omega}\times \RR^{M\times N}\to \RR$ has a recession function which is denoted by $f^\infty$ (see \eqref{f2} below); $Du$ is the distributional derivative of $u:\Omega\to \RR^M$, a finite matrix-valued Radon measure on $\Omega\subset\RR^N$, and
$$
	Du=(\nabla u) \cL^N+D^su
$$
is its Radon-Nikod\'{y}m decomposition with respect to the Lebesgue measure $\cL^N$, with $D^su$ and $\nabla u$ denoting the singular part and the density of the absolutely continuous part, respectively. Moreover, we used that $\frac{\d Du}{\d \abs{Du}}(x)=\frac{\d D^su}{\d \abs{D^su}}(x)$ for $\abs{D^s u}$-a.e.~$x$; here $\abs{\mu}$ denotes the total variation of the measure $\mu$ and 
$\mu=\frac{\d\mu}{\d\abs{\mu}}\abs{\mu}$ is its the polar decomposition, i.e.,
$\frac{\d\mu}{\d\abs{\mu}}$ is the density of $\mu$ with respect to $\abs{\mu}$ (Besicovitch derivative) which satisfies $\absb{\frac{\d\mu}{\d\abs{\mu}}(x)}=1$ for $\abs{\mu}$-a.e.~$x$. 

The aim of this paper is to give a precise characterization of weak$^*$ lower semicontinuity of \eqref{defF} in $\mathrm{BV}$ without prescribing fixed Dirichlet boundary data or assuming that $f$ is bounded from below. For $f$  bounded from below, weak$^*$ lower semicontinuity and relaxation of $F$ were examined  in \cite{FoMue92a,FoMue93a}, even including explicit dependence on $u$ (not just $Du$). Nevertheless, in order to characterize the appropriate generalizations of gradient Young measures in $\mathrm{BV}$ \cite{KriRi10b} (gradient DiPerna-Majda measures) it is necessary to allow also for a linear growth of the negative part of the integrand. In this case, weak$^*$ lower semicontinuity and relaxation is treated in \cite{KriRi10a,KriRi10b} but the results are valid only for sequences with fixed trace on the boundary of $\Omega$, or if a term penalizing jumps at the boundary is added to the functional (for related results in $W^{1,p}$ with $p>1$ see \cite{FoMuePe98a,KaKru08a}). In particular, it is not possible to use them to show attainment of minimizers of $F$ without prescribed Dirichlet data, and the characterization of the generalized Young measures generated by gradients (distributional derivatives of functions in $\mathrm{BV}$) with free function values on the boundary.
Such characterizations are known in $W^{1,p}$ for $p>1$ \cite{KroeKru13a,Kru10a}. 

Clearly, if no Dirichlet boundary condition is prescribed, boundary effects play a role in characterizing the weak$^*$ lower semicontinuity. To see this, we consider the following simple example.
\begin{ex}
 Take $\varphi\in W^{1,1}_0(B_1(0);\RR^M)$, where $B_r(0)$ is a  ball in $\RR^N$  with the radius $r>0$   centered at $0$, and extend it by zero to the whole $\RR^N$. 
Define for $x\in\RR^N$ and $k\in\NN$  $\varphi_k(x)=k^{n-1}\varphi(kx)$, i.e., $\varphi_k \wstar 0$ in $\mathrm{BV}(B_1(0);\RR^M)$ and  consider a smooth  domain
 $\Omega\in\RR^N$ such that $0\in\partial\Omega$, $\nu_0$ is the outer unit normal to $\partial\Omega$ at $0$. Moreover, take a  function  $f:\RR^{M\times N}\to\RR$ to be  positively $1$-homogeneous, i.e., $f(\alpha \xi)=\alpha f(\xi) $ for all $\alpha\ge 0$. If $F$ from \eqref{defF}    is weakly* lower semicontinuous then 
\begin{eqnarray}\label{eq:intro}
0&=&F(0)\le \liminf_{k\to\infty} \int_\Omega f(\nabla\varphi_k(x))\,\d x= \liminf_{k\to\infty} \int_{B_{1/k}(0)\cap\Omega}f(\nabla\varphi_k(x))\,\d x\\ &=& \liminf_{k\to\infty}\int_{B_{1/k}(0)\cap\Omega}k^n f(\nabla \varphi(kx))\,\d x= \int_{B_1(0)\cap \{y\in\RR^N;\ \nu_0\cdot y<0\}} f(\nabla\varphi(y))\,\d y\ .\nonumber
\end{eqnarray}
Thus, we see that 
\begin{equation}
0\le \int_{B_1(0)\cap \{y\in\RR^N;\ \nu_0\cdot y<0\}} f(\nabla\varphi(y))\,\d y
\label{example}
\end{equation}
for all $\varphi\in W^{1,1}_0(B_1(0);\RR^M)$  forms a necessary condition for weak$^*$ lower semicontinuity of $F$ whenever $f$ is positively $1$-homogeneous.  
\end{ex}

Within this paper, we prove that, in case of a smooth boundary, condition \eqref{example} together with quasiconvexity of $f$ is indeed also \emph{sufficient} for weak* lower semi-continuity, however with $f$ replaced by its recession function if the former is not homogeneous of degree one; cf. Theorem \ref{thm:BVwlsc} below. In case $\Omega$ is not smooth enough, we suitably generalize condition \eqref{example} and introduce the notion of {\it quasi-sublinear growth from below} (cf. Definition \ref{def:qslb}), which is central in our work. As for the sufficency of quasi-convexity and quasicovenxity of quasi-sublinear growth from below for weak* lower semi-continuity, our results can then cope with domains with a rather irregular boundary, for which a jump term integrated over the boundary would not be well defined. For the necessity part, we need to work on an extension domain so that we can rely on compact embedding results.

Nevertheless, it should be stressed that even if $\Omega$ is of such smoothness that we may extend the function $u$ entering \eqref{defF} by zero to $\mathrm{BV}(\Omega'; \RR^M)$ to some regular domain $\Omega' \supset \Omega$, studying weak* lower semicontinuity of \eqref{defF} is \emph{not} equivalent to studying the extended problem due to the additional contribution from the jump term over the boundary. To illustrate this, consider the following example:

\begin{ex}[following \cite{KriRi10a}, \cite{BaCheMaSa13a}]
\label{rem-qcANDqslb}
Choose $\Omega=(0,1)$ and define  $u_n:=\chi_{(0,\frac{1}{n})}$ so that $Du_n=-\delta_{\frac{1}{n}}$. Further let us choose and $f(x,\xi):=\xi$; 
then $F$ is a linear functional and $F(u_n)=-1$ for all $n$, but $u_n \wstar 0$ to $0$ in $\mathrm{BV}((0,1))$ and $F(0)=0>-1$.

Nevertheless, if we enlarged the domain to, say, $(-1,1)$ and extended $u_n$ by zero, then we would get $F(u_n)=0$, giving weak* lower semicontinuity along this sequence.

In the context of our characterization, we notice that while $f$ is linear and thus quasiconvex in its second variable, it does not satisfy our new condition of quasi-sublinear growth from below at $x_0=0$.

\end{ex}

We also mention that results concerning regularity and uniqueness\footnote{up to additive constants; in general, even with a jump term on the boundary penalizing the distance to prescribed Dirichlet data, these cannot be avoided because the recession function is never strictly convex} of minimizers are available, although 
only for convex and coercive integrands assuming a form of very strong ellipticity \cite{BeSc13a}.

The plan of the paper is as follows.  Our main result, Theorem~\ref{thm:BVwlsc} is stated in  Section~\ref{sec:results} preceded with necessary definitions and notation. In particular, Definition~\ref{def:qslb} describes quasi-sublinear growth from below. Various useful variants of this condition are discussed in Section~\ref{sec:variants} and the Appendix.
As a preparatory part for the proof of Theorem~\ref{thm:BVwlsc},  Section~\ref{sec:dec} deals with a suitable decomposition of sequences in $\mathrm{BV}$. Finally, a proof of the main result is given in Section~\ref{sec:necsuff}. 

\section{Main results}\label{sec:results}

In the bulk of this article, the domain $\Omega\subset\RR^N$ can be any open and bounded set.   If we additionally need  $\Omega$ to allow for a compact embedding of $\mathrm{BV}(\Omega;\RR^M)$ into $L^1(\Omega;\RR^M)$ we write $\Omega\in E(\RR^N)$. An extension domain (or even a Lipschitz domain) serves as an example of a set belonging to $E(\RR^N)$.
If even more  regularity of the  boundary is required, this is stated on the spot; in that case, we usually 
need a boundary of class $C^1$.  Here and in the sequel, $\cM(\Omega;\RR^{M\times N})$ means the set of $\RR^{M\times N}$-valued Radon measures on $\Omega$ and   $\mathrm{BV}(\Omega;\RR^M)$ denotes the standard space of maps $\Omega\to\RR^M$ which have bounded variation; cf.~\cite{AmFuPa00B} for details.  As the  weak$^*$ convergence in $\mathrm{BV}(\Omega;\RR^M)$ is a central notion of our analysis we recall that $(u_n)_{n\in\NN}\subset \mathrm{BV}(\Omega;\RR^M)$ converges weakly$^*$ to $u\in\mathrm{BV}(\Omega;\RR^M)$ if 
$u_n\to u$ strongly in $L^1(\Omega;\RR^M)$ and $Du_n\stackrel{*}{\rightharpoonup} Du$ in $\cM(\Omega;\RR^{M\times N})$; see \cite[Def.~3.11]{AmFuPa00B}. If $\mu\in\cM(\Omega)$ then $|\mu|$ denotes its total variation (norm), i.e., $|\mu|:=\sup|\int_\Omega f\,d\mu|$ where the supremum is taken over all $f\in C_0(\Omega)$ such that $\|f\|_{C_0(\Omega)}=1$. Here, $C_0(\Omega)$ is the space of continuous functions vanishing at the boundary of $\Omega$.
 Moreover, $W^{1,p}(\Omega;\RR^M)$ and $L^p(\Omega;\RR^M)$ stand for classical Sobolev and Lebesgue spaces, respectively. 
Throughout the paper, we assume that
\be
	f:\overline{\Omega}\times \RR^{M\times N}~~\text{is continuous}\tag{f:0}\label{f0}
\ee
with at most linear growth, i.e., there is a constant $C\geq 0$ such that
\be
	\abs{f(x,\xi)}\leq C (\abs{\xi}+1)~~\text{for every $(x,\xi)\in \overline{\Omega}\times \RR^{M\times N}$,}\tag{f:1}\label{f1}
\ee
and $f$ admits a recession function in the following sense:
\be
\begin{aligned}
	&f^\infty(x,\xi):=\lim_{\underset{\underset{y\to x}{t\to+\infty}}{\eta\to\xi}} \frac{f(y,t\eta)}{t}~~\text{exists for every $(x,\xi)\in \overline{\Omega}\times (\RR^{M\times N}\setminus \{0\}$),}
\end{aligned}	\tag{f:2}\label{f2}
\ee
Note that by definition, $f^\infty$ is continuous on $\overline{\Omega}\times (\RR^{M\times N}\setminus \{0\})$ and
positively $1$-homogeneous in $\xi$. 

\begin{rem} The restriction of $F$ to $W^{1,1}(\Omega;\RR^M)$ reads
$$
	\tilde{F}(u):=\int_\Omega f(x,\nabla u)\,\d x,~~u\in W^{1,1}(\Omega;\RR^M),
$$
and $F$ is the natural extension of $\tilde{F}$ to $\mathrm{BV}$. 
\end{rem}

Our main result provides a characterization of sequential lower semicontinuity of $F$ with respect to weak$^*$-convergence in $\mathrm{BV}$, in the usual sense recalled below. It is natural to expect that this characterization will be linked to the well-known quasiconvexity condition in the sense of Morrey \cite{Mo52a} as given in Definition \ref{def:qc}. In addition to that, we need an additional property of $f$ to prevent negative contributions of sequences concentrating at the boundary of the domain; cf. Definition \ref{def:qslb}.

\begin{defn}[w$^*$lsc]\label{def:wlsc}
We say that the functional $F:\mathrm{BV}(\Omega;\RR^M) \to \RR$ is sequentially \emph{weakly$^*$ lower semicontinuous (w$^*$lsc)} in $\mathrm{BV}(\Omega;\RR^M)$ if 
$$
\liminf_{n \to \infty} F(u_n)\geq F(u)
$$
for every sequence $(u_n)\subset \mathrm{BV}(\Omega;\RR^M)$ and such that $u_n\wstar u$ in $\mathrm{BV}$.
\end{defn}
\begin{defn}[Quasiconvexity]\label{def:qc}
A function $g:\RR^{M\times N}\to \RR$ is called \emph{quasiconvex at $\xi\in \RR^{M\times N}$} if
\begin{align*}
	\int_{B} \big(g(\xi+\nabla \varphi(y))-g(\xi)\big) \, \d y\geq 0
	~~\text{for every $\varphi\in W_0^{1,\infty}(B;\RR^M)$},
\end{align*}
where $B$ denotes the unit ball in $\RR^N$ (or, equivalently, any arbitrary fixed bounded Lipschitz domain). We say that $g$ is \emph{quasiconvex} (qc) if it is quasiconvex at every $\xi\in \RR^{M\times N}$.
\end{defn}
\begin{defn}[Quasi-sublinear growth from below]\label{def:qslb}
We say that a function $g:\RR^{M\times N}\to \RR$ is \emph{quasi-sublinear from below} (\emph{qslb}) at a point
$x_0\in \overline{\Omega}$ if
\[
\bald
	&\bald[t]
	&\text{for every $\eps>0$, there exist $\delta=\delta(\eps)>0$, $C=C(\eps)\in \RR$ s.t.}\\
	&\int_{\Omega\cap B_\delta(x_0)} g(\nabla v(x))\, \d x
	~\geq~
	-\eps \int_{\Omega\cap B_\delta(x_0)} \abs{\nabla v(x)}\, \d x-C 
	\eald  \\
	&\text{for every $v\in W^{1,1}(B_\delta(x_0)\cap\Omega;\RR^M)$ with $v=0$ near $\partial B_\delta(x_0)$.}
\eald
\]
\end{defn}
\begin{rem}
Definition~\ref{def:qslb} is a straightforward extension to the case $p=1$ of the corresponding condition for $p>1$, (3.2) in \cite{Kroe10b}.  If $\Omega$ is an extension domain, the class of test functions can be replaced by
$v\in W_0^{1,1}(B_\delta(x_0);\RR^M)$; essentially, we want $v$ to vanish in a neighborhood of
(or have zero trace on) the ``interior'' boundary $\partial B_\delta(x_0)\cap \Omega$, while being free on $\partial\Omega$. Whenever necessary, $v$ is understood to be extended 
by zero to $\Omega\setminus B_\delta(x_0)$. 
\end{rem}
\begin{rem}
Quasi-sublinear growth from below is a local condition at the point $x_0$ in the sense that if it holds for some $\delta$, then also for all $\delta'\leq \delta$, because the class of test functions becomes smaller, and the difference in the integral on the left hand side is given by $\int_{\Omega\cap (B_{\delta}(x_0)\setminus B_{\delta'}(x_0))} g(0)\,dx$, a constant that can be absorbed by $C$.
If $\partial\Omega$ is of class $C^1$ near $x_0$,
and $g$ is regular enough, it is possible to rescale the domain of integration to unit size and pass to the limit as $\delta\to 0$. Doing so reduces the quasi-sublinear growth from below of $g$ to the following, equivalent condition (details are given in Proposition~\ref{prop:qslb}):
\be\label{qslb-limit1}
\bald
	&\bald[t]
	&\text{For every $\eps>0$, there exists $C=C(\eps)\geq 0$ such that}\\
	&\int_{D_{x_0}} g(\nabla \varphi(y))\, \d y
	~\geq~
	-\eps \int_{D_{x_0}} \abs{\nabla \varphi(y)}\, \d y-C 
	\eald & \\
	&\text{for all $\varphi\in W_0^{1,1}(B_1(0);\RR^M)$, where $D_{x_0}:=\{y\in B_1(0)\mid y\cdot \nu_{x_0}<0\}$,}
\eald
\ee
the half-ball opposite of the outer normal $\nu_{x_0}$ to $\partial\Omega$ at $x_0$. 
This corresponds to the case $p=1$ in Theorem 1.6 (ii) in \cite{Kroe10b} 
and $1$-quasisubcritical growth from below at a boundary point as defined in \cite{KroeKru13a}.
\end{rem}
\begin{rem}
If one replaces the gradients $\nabla v$ by arbitrary integrable matrix fields in $L^1$ in one of the versions of quasi-sublinear growth from below (which makes it more restrictive due to the then larger class of test functions), it turns into ``standard'' sublinear growth from below, in the sense that for each $\eps>0$, there is $C_\eps$ such that $g(\xi)\geq -\eps\abs{\xi}-C_\eps$ for all $\xi\in \RR^{M\times N}$. The latter is equivalent to $\liminf_{\abs{\xi}\to\infty} \frac{g(\xi)}{\abs{\xi}} \geq 0$.
\end{rem}
With these definitions at hand, we formulate our main result: 
\begin{thm}\label{thm:BVwlsc}
Let $\Omega\in E(\RR^N)$  and assume that \eqref{f0}--\eqref{f2} hold. Then the functional $F$ 
defined in
\eqref{defF} is w$^*$lsc if and only if its integrand $f$ simultaneously satisfies the following:
\begin{enumerate}
\item[(i)] $f(x_0,\cdot)$ is quasiconvex for a.e.~$x_0\in \Omega$;
\item[(ii)] $f(x_0,\cdot)$ is of quasi sub-linear grwoth from below at $x_0$ for every $x_0\in \partial \Omega$.
\end{enumerate}
Moreover, $f$ can be replaced with $f^\infty$ in (ii), and if $\partial\Omega$ is of class $C^1$, then (ii) holds if and only if for every $x_0\in\partial\Omega$,
\be
\label{qslb-limit2}
\int_{D_{x_0}}
f^\infty(x_0,\nabla \varphi(y))\,dy \geq 0\quad\text{for all $\varphi\in W_0^{1,1}(B_1(0);\RR^M)$,}
\ee
where $D_{x_0}:=\{y\in B_1(0)\mid y\cdot \nu_{x_0}<0\} $, with the outer normal $\nu_{x_0}$ to $\partial\Omega$ at $x_0$.
\end{thm}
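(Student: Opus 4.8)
The plan is to establish the equivalence in Theorem~\ref{thm:BVwlsc} by proving necessity and sufficiency of the two conditions separately, and then to relate condition (ii) to the recession function and to the half-ball condition \eqref{qslb-limit2}.

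For \emph{necessity}, I would first test w$^*$lsc of $F$ against sequences localized in the interior. Fix $x_0\in\Omega$ a Lebesgue point of $x\mapsto f(x,\cdot)$ (in a suitable sense) and a matrix $\xi\in\RR^{M\times N}$, and take $u_n(x):=\xi x+\frac1n\varphi(n(x-x_0))$ on a small ball $B_\rho(x_0)\subset\Omega$ (and $u_n:=\xi x$ outside), with $\varphi\in W^{1,\infty}_0(B_1(0);\RR^M)$. Then $u_n\wstar \xi x$ in $\mathrm{BV}$ and, by continuity of $f$ and a standard blow-up/Riemann-sum argument, $\liminf_n F(u_n)-F(\xi x)$ reduces (after rescaling) to $\mathcal L^N(B_1)^{-1}\int_{B_1}\big(f(x_0,\xi+\nabla\varphi)-f(x_0,\xi)\big)\,dy$; w$^*$lsc forces this to be $\ge0$, giving (i). For (ii), fix $x_0\in\partial\Omega$ and mimic the construction in the Example of the introduction: take $\varphi\in W^{1,1}_0(B_\delta(x_0);\RR^M)$ (or, using that $\Omega\in E(\RR^N)$, vanishing near the interior boundary $\partial B_\delta(x_0)\cap\Omega$), and set $v_k(x):=k^{N-1}\varphi(k(x-x_0))$, so $v_k\wstar 0$ in $\mathrm{BV}$; here $\Omega\in E(\RR^N)$ is exactly what gives the compact embedding needed to ensure $v_k\to0$ strongly in $L^1$. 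Expanding $F(v_k)$, the absolutely continuous part produces (after rescaling and using \eqref{f2} to pass from $f$ to $f^\infty$) a term converging to $\int_{\Omega\cap B_\delta(x_0)}f^\infty(x_0,\nabla\varphi)\,dx$ up to a bounded error, so w$^*$lsc gives exactly the qslb inequality for $f^\infty(x_0,\cdot)$ at $x_0$; since $f^\infty\le$ (roughly) $f$ plus constants in the relevant regime, qslb for $f^\infty$ implies qslb for $f$, so (ii) holds with both $f$ and $f^\infty$.

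For \emph{sufficiency}, I would take an arbitrary sequence $u_n\wstar u$ in $\mathrm{BV}(\Omega;\RR^M)$ and use the decomposition of sequences in $\mathrm{BV}$ prepared in Section~\ref{sec:dec} to split $Du_n$ into an oscillation/interior part and a part carrying the concentration of mass towards $\partial\Omega$. On the interior part, quasiconvexity (i), together with the by-now classical lower semicontinuity theory for signed integrands on $\mathrm{BV}$ (the Kristensen--Rindler results \cite{KriRi10a,KriRi10b} applied locally, or a direct blow-up argument), yields $\liminf$ control from below by $F(u)$ minus a term accounting for the concentrating part. The concentrating part is, after localization near each boundary point $x_0$ and rescaling, of precisely the form appearing in Definition~\ref{def:qslb}; condition (ii), i.e.\ qslb of $f^\infty(x_0,\cdot)$ at $x_0$, says exactly that this contribution is bounded below by $-\eps\|\nabla v\|_{L^1}-C$, and since $\eps>0$ is arbitrary while the total variation stays bounded, the concentrating part contributes nothing negative in the limit. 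Combining the two gives $\liminf_n F(u_n)\ge F(u)$.

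Finally, for the equivalence of (ii) with \eqref{qslb-limit2} when $\partial\Omega\in C^1$, I would invoke Proposition~\ref{prop:qslb} (announced in the excerpt), whose content is precisely that under $C^1$ regularity one may flatten the boundary near $x_0$, rescale $B_\delta(x_0)\cap\Omega$ to the half-ball $D_{x_0}$, and pass $\delta\to0$; the positive $1$-homogeneity of $f^\infty(x_0,\cdot)$ then lets one absorb the $\eps$-term by a scaling $\varphi\mapsto\lambda\varphi$ and send $\lambda\to\infty$, turning the family of inequalities ``$\ge-\eps\|\nabla\varphi\|_{L^1}-C(\eps)$'' into the single homogeneous inequality \eqref{qslb-limit2}, and conversely \eqref{qslb-limit2} trivially implies each $\eps$-version with $C=0$.

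I expect the \textbf{main obstacle} to be the sufficiency direction, specifically making the separation of the ``interior/oscillation'' and ``boundary concentration'' parts of $Du_n$ rigorous while keeping the integrand's negative part under control: because $f$ is only of linear growth from below, one cannot simply discard boundary layers, and the coupling between the shape of $\partial\Omega$ near $x_0$ and the admissible concentration profiles must be tracked carefully so that the rescaled boundary contribution genuinely falls within the test-function class of Definition~\ref{def:qslb}. This is where the decomposition results of Section~\ref{sec:dec} and a careful diagonalization (choosing $\delta$, then $\eps$, then $n$) are essential.
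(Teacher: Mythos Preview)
Your overall architecture matches the paper's: necessity of (i) via interior test sequences, necessity of (ii) via sequences concentrating at a boundary point, and sufficiency by splitting into an interior piece (handled by Kristensen--Rindler) and a boundary-concentrating piece (controlled by qslb) using the local decomposition of Section~\ref{sec:dec}. Two points, however, are not quite right as written.

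For necessity of (ii), your rescaling $v_k(x)=k^{N-1}\varphi(k(x-x_0))$ does \emph{not} produce the limit $\int_{\Omega\cap B_\delta(x_0)} f^\infty(x_0,\nabla\varphi)\,dx$ that you claim. After the change of variables $y=k(x-x_0)$ and passage to $f^\infty$, the domain of integration becomes $k(\Omega-x_0)\cap B_\delta(0)$, which converges (for $C^1$ boundary) to a \emph{half}-ball, not to $\Omega\cap B_\delta(x_0)$; for irregular $\partial\Omega$ there is no clean limit at all. The paper instead argues by contradiction: it negates the ``unfrozen'' recession-function variant \eqref{qslb-unfrozeninfty} (equivalent to qslb by Proposition~\ref{prop:qslb}), obtaining directly a sequence $v_n\in W^{1,1}(B_{1/n}(x_0)\cap\Omega)$ violating the inequality, normalizes to unit $W^{1,1}$-norm, and checks that this contradicts w$^*$lsc. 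No blow-up of the domain is needed, and no boundary regularity beyond $\Omega\in E(\RR^N)$ is used.

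For sufficiency, your sentence ``bounded below by $-\eps\|\nabla v\|_{L^1}-C$, and since $\eps>0$ is arbitrary while the total variation stays bounded, the concentrating part contributes nothing negative'' hides a genuine gap: the constant $C=C(\eps)$ in Definition~\ref{def:qslb} need not vanish as $\eps\to 0$, so a direct application of qslb to the localized concentrating pieces $c_{j,n}$ only yields $F(c_{j,n})-F(0)\ge -\eps\,|Dc_{j,n}|-\tilde C(\eps)$, which does not give $\liminf_n\big(F(c_{j,n})-F(0)\big)\ge 0$. The paper closes this gap by an almost-minimizer argument (Propositions~\ref{prop:asympaddconc} and~\ref{prop:lscbndconc}): qslb makes the auxiliary functional $G_j(v)=\int_{\Omega\cap B_{\delta_j}}df(x,Dv)+\eps|Dv|$ \emph{bounded below} on its test class, and then one picks $u^*$ with $G_j(u^*)\le\inf G_j+\eps$ and uses the asymptotic additivity $G_j(u^*+c_{j,n})-G_j(u^*)-G_j(c_{j,n})+G_j(0)\to 0$ (valid because $c_{j,n}$ is purely concentrating at $\partial\Omega$) to conclude $\liminf_n G_j(c_{j,n})\ge G_j(0)$. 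This is the step your outline is missing; the rest of your sufficiency plan, including the use of Proposition~\ref{prop:fdecloc} to recombine the pieces, agrees with the paper.
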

A detailed proof of the if and only if characterization of weak* lower semicontinuity is the content of Section~\ref{sec:necsuff} while, the second part of the theorem is content of Proposition \ref{prop:qslb}. For convenience, we sketch the main idea of the proof of the charachterization of weak* lower semi-continuity here. 

\begin{proof}[Idea of the proof]
The necessity of the quasiconvexity is standard while the necessity of the quasi-sublinear growth from below follows by a contradiction argument.

As for the sufficiency, we assume, for simplicity, in this sketch that the sequence $(u_n)$ from Definition \ref{def:wlsc} satisfies $u_n \wstar 0$ in $\mathrm{BV}(\Omega; \mathbb{R}^M)$. Then proof then relies on the observation that we can ``separate'' the behavior of $(u_n)$ from in the interior of the domain $\Omega$ and on its boundary. Indeed, by the local decomposition lemma (shown in Section \ref{sec:dec}) we may write $(u_n)$ as a sum of a sequence that is supported inside $\Omega$ and a sequence that is purely concentrating at the boundary (i.e.~it is supported in a vanishingly small neighbourhood of $\partial \Omega$. Moreover, the decomposition is such that we can treat this two sequences as essentially independent; i.e. it suffices to show weak* lower semicontinuity along each of the sequences separately (cf.~also Proposition \ref{prop:fdecloc}).

Now, for the sequence supported inside the domain, we are in the situation of 
\cite{KriRi10b} (since all the members of the sequence have fixed--zero--Dirichlet boundary data). For the sequence that is purely concentrating on the boundary, we realize that along such sequences functionals bounded from below are weakly lower 
semicontinuous; cf. Proposition \ref{prop:lscbndconc}. Note that we need only the functional to have a lower bound, the density function may be unbounded. Finally, this lower bound is, roughly, provided by our quasi-sublinear growth from below condition from Definition \ref{def:qslb}. 
\end{proof}

\begin{rem}
Condition \eqref{qslb-limit2} is closely related but not equivalent to boundary quasiconvexity of $f^\infty(x_0,\cdot)$ at the zero matrix in direction $\nu_{x_0}$. For comparison: Boundary quasiconvexity at a matrix $\xi\in \RR^{M\times N}$ as defined in \cite{Spre96B,MieSpre98a}\footnote{Originally, boundary quasiconvexity at critical points was introduced by  Ball \& Marsden in \cite{BaMa84a} as a necessary condition for strong local minima}, requires that there exists a matrix%
\footnote{Actually, the original definition of boundary quasiconvexity in \cite{Spre96B,MieSpre98a} calls for the existence of a vector $q\in \RR^M$ playing the role of $A \nu_{x_0}$ in the boundary integral 
$\int_{\Gamma_{x_0}} \varphi(y) \cdot A \nu_{x_0} d\cH^{N-1}(y)$, where $\Gamma_{x_0}=\{y\in \bar{B}_1\mid y\cdot \nu_{x_0}=0 \}$ is the part of the boundary of $D_{x_0}$ where the trace of $v$ is free, which is equal to the  right hand side in \eqref{qcb} by integration by parts. Yet, expressing the right hand side in terms of the matrix $A$ and a volume integral makes it obvious that \eqref{qcb} is in fact a generalized notion of convexity.}
$A\in \RR^{M\times N}$ (only depending on $\xi$ and $x_0$) such that
\be
\label{qcb}
\bald
	&\int_{D_{x_0}}
	f^\infty(x_0,\nabla \varphi(y)+\xi)\, \d y - \int_{D_{x_0}}
	f^\infty(x_0,\xi)\,dy\\
	&\qquad \geq \int_{D_{x_0}} A:\nabla \varphi(y)\, \d y 
\quad\text{for all $\varphi\in W_0^{1,1}(B_1;\RR^M)$.}
\eald
\ee
Since $f^\infty(x_0,0)=0$, \eqref{qslb-limit2} coincides with
with \eqref{qcb} at $\xi=0$ \emph{if} $A=0$ is admissible, but in general, \eqref{qslb-limit2} is more restrictive.
\end{rem}

\begin{rem}
A further intuition on the quasi-sublinear growth from below condition can be gained from Proposition \ref{prop:qslb} (below) where we prove that for \emph{any interior point $x$ in $\Omega$} this condition is equivalent to the quasiconvexity of the recession function in $0$; in general this is a weaker condition that standard quasiconvexity. However, along a purely concentrating sequence in $x$ one may, roughly, ``replace'' $f$ by its recession function and the weak* limit of such a sequence is necessarily 0. Nevertheless, one should be very careful with such an intuition at the boundary; cf. Exammple \ref{rem-qcANDqslb} where it is shown that at boundary points quasi-sublinear growth from below is \emph{not} implied by quasiconvexity in general.
\end{rem}

\begin{rem}
In Example \ref{rem-qcANDqslb} we saw a linear functional that does not satisfy the quasi-sublinearity from below conditions. However, functions that satisfy \eqref{qslb-limit2} even with equality can be constructed: Let $t\in\RR^N$ be a vector perpendicular to $\nu_{x_0}$ and  $f:\RR^{M\times N}\to\RR$, $f(\xi)=f^\infty(\xi):=\xi{:}a\otimes t$ with $a\in\RR^M$. Thus, $f$ defines a so-called linear Null Lagrangian at the boundary. We refer to 
\cite{KaKroeKru12a} for Null Lagrangians at the boundary of higher order. 
\end{rem}

\section{Variants of quasi-sublinear growth from below}\label{sec:variants}

In this section, we collect several conditions equivalent to quasi-sublinear growth from below, useful either for technical purposes (particularly \eqref{qslb-unfrozen} and \eqref{qslb-unfrozenBV}) or (somewhat) easier to check for a given integrand.
Moreover, we prove in Proposition \ref{prop:qslb} the second part of Theorem \ref{thm:BVwlsc}; i.e. that $f(x_0,\cdot)$ turns out to be of quasi sub-linear growth from below if and only if its recession function is,
and the latter is equivalent to \eqref{qslb-limit2} in case of $C^1$-boundary. 

First, we realize that for quasi sub-linear growth from below of the recession function the constant $C$ can be dropped (due to $1$-homogeneity):
\be \label{qslb-finfty}
\bald
	&\bald[t]
	&\text{for every $\eps>0$, there exists $\delta=\delta(\eps)>0$ such that}\\
	&\qquad\int_{\Omega\cap B_\delta(x_0)} f^\infty(x_0,\nabla v(x))\,dx
	~\geq~
	-\eps \int_{\Omega\cap B_\delta(x_0)} \abs{\nabla v(x)}\,dx 
	\eald &\\
	&\text{for every $v\in W^{1,1}(B_\delta(x_0)\cap\Omega;\RR^M)$ with $v=0$ near $\partial B_\delta(x_0)$}.
\eald
\ee
On the other hand, it is also possible to replace the ``frozen'' $x_0$ in the first argument of $f$ by the variable of integration, which yields a condition more convenient for proving weak$^*$ lower semicontinuity:
\be \label{qslb-unfrozen}
\bald
	&\bald[t]
	&\text{For every $\eps>0$, there exists $\delta>0$ and $C\geq 0$ such that}\\
	&\qquad \int_{\Omega\cap B_\delta(x_0)} f(x,\nabla v(x))\,dx
	~\geq~
	-\eps \int_{\Omega\cap B_\delta(x_0)} \abs{\nabla v(x)}\,dx - C
	\eald \\
	&\text{for every $v\in W^{1,1}(\Omega\cap B_\delta(x_0);\RR^M)$ with $v=0$ near $\partial B_\delta(x_0)$}.
\eald
\ee
This variant is more natural if $f^\infty$ is not continuous in its first variable (and in that case, it is no longer equivalent to qslb of $f^\infty(x_0,\cdot)$ in the sense of Definition~\ref{def:qslb}).

\begin{rem}\label{rem:qslbBV}
Using the density of $W^{1,1}$ in $\mathrm{BV}$ with respect to area-strict convergence ($\langle\cdot\rangle$-strict convergence)\footnote{see Section 2.2 in \cite{KriRi10a}; by definition, a sequence $(u_n)\subset \mathrm{BV}$ converges $\langle\cdot\rangle$-strictly
to $u\in \mathrm{BV}$ if $u_n\wstar u$ in $\mathrm{BV}$ and
$
	\int_\Omega \sqrt{1+\abs{\nabla  u_n}^2}\,dx+\abs{D^s u_n}(\Omega)
	\to 
	\int_\Omega \sqrt{1+\abs{\nabla  u}^2}\,dx+\abs{D^s u}(\Omega)
$
}, together 
with the associated variant of Reshetnyak's continuity theorem (\cite[Theorem 5]{KriRi10a} and, more general, \cite[Theorem 1]{RiSha13a}),
all our variants of quasi-sublinear growth from below have equivalent versions extending the class of test functions from $W^{1,1}$ to $\mathrm{BV}$. Most importantly for our purposes, \eqref{qslb-unfrozen} is equivalent to the following:
\be \label{qslb-unfrozenBV}
\bald
	&\bald[t]
	&\text{For every $\eps>0$, there exist $\delta=\delta(\eps)>0$, $C=C(\eps)\geq 0$ such that}\\
	&\qquad\int_{\Omega\cap B_\delta(x_0)} 
	df(x,Dv)
	~\geq~
	-\eps \abs{Dv} (\Omega\cap B_\delta(x_0)) - C
	\eald \\
	&\text{for every $v\in \mathrm{BV}(\Omega\cap B_\delta(x_0);\RR^M)$ with $v=0$ near $\partial B_\delta(x_0)$}.
\eald
\ee
\end{rem}
The relationship between the variants of quasi-sublinear growth from below, and their link to quasiconvexity for interior points $x_0$, can be summarized as follows:  
\begin{prop}\label{prop:qslb}
Suppose that $f$ satisfies \eqref{f0}--\eqref{f2}.
Then the following holds:
\begin{enumerate}
\item[(a)] For every $x_0\in\overline{\Omega}$,
$$
	\text{$f(x_0,\cdot)$ is qslb at $x_0$}
	~~\Longleftrightarrow~~
	\text{$f^\infty(x_0,\cdot)$ is qslb at $x_0$}
	~~\Longleftrightarrow~~
	\eqref{qslb-finfty}
	~~\Longleftrightarrow~~
	\eqref{qslb-unfrozen}.
$$
\item[(b)]
For every $x_0 \in \Omega$,
$$	
	\text{$f(x_0,\cdot)$ qc at $0$}
	~~\Longrightarrow~~
	\text{$f^\infty(x_0,\cdot)$ qc at $0$}
  ~~\Longleftrightarrow~~ 
  \text{$f^\infty(x_0,\cdot)$ qslb at $0$}.
$$
\item[(c)]
For every $x_0 \in\partial\Omega$ such that $\partial \Omega$ is of class $C^1$ near $x_0$,
$$	
	\text{$f^\infty(x_0,\cdot)$ qslb at $x_0$}
	~~\Longleftrightarrow~~
	\text{\eqref{qslb-limit1} for $g:=f^\infty(x_0,\cdot)$}
	~~\Longleftrightarrow~~
	\eqref{qslb-limit2}.
$$
\end{enumerate}
\end{prop}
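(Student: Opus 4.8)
\textbf{Plan of proof for Proposition~\ref{prop:qslb}.}

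The plan is to establish the three parts essentially by chasing through the definitions, using $1$-homogeneity of $f^\infty$, the growth bound \eqref{f1}, and a blow-up/rescaling argument for part (c). I would organize the argument around a few reusable observations rather than proving each equivalence from scratch.

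\emph{Part (a).} First I would prove $\text{qslb of }f(x_0,\cdot)\Leftrightarrow\eqref{qslb-unfrozen}$. The implication ``$\Leftarrow$'' follows by comparing $f(x,\nabla v)$ with $f(x_0,\nabla v)$: by \eqref{f0}, $f$ is uniformly continuous on the compact set $\overline{\Omega\cap B_\delta(x_0)}\times \overline{B_R(0)}$ for fixed $R$, but since $\nabla v$ is unbounded we instead split $|f(x,\xi)-f(x_0,\xi)|\le \omega_R(|x-x_0|)$ on $|\xi|\le R$ and use \eqref{f1} to bound the contribution of $|\xi|> R$ by $2C(|\xi|+1)$, which can be absorbed into the $\eps|\nabla v|$ term on the region $\{|\nabla v|>R\}$ after choosing $R=R(\eps)$ large; the error on $\{|\nabla v|\le R\}$ is then controlled by shrinking $\delta$ so that $\omega_R(\delta)<\eps$, with the resulting $\eps R |\Omega\cap B_\delta(x_0)|$ absorbed into $C$. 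The implication ``$\Rightarrow$'' is the same estimate run in reverse. Next, the equivalence $\text{qslb of }f(x_0,\cdot)\Leftrightarrow\text{qslb of }f^\infty(x_0,\cdot)\Leftrightarrow\eqref{qslb-finfty}$: the step from $f^\infty$-qslb to \eqref{qslb-finfty} is the remark in the text that the constant $C$ is unnecessary by $1$-homogeneity (replace $v$ by $v/\lambda$ and let $\lambda\to0^+$, using that $f^\infty(x_0,\lambda^{-1}\xi)=\lambda^{-1}f^\infty(x_0,\xi)$). For $f(x_0,\cdot)$ versus $f^\infty(x_0,\cdot)$, the key is that qslb only sees the behavior of $v$ at large amplitude: given $v$, rescale to $v_\lambda:=\lambda v$ with $\lambda\to\infty$, divide the qslb inequality for $f(x_0,\cdot)$ by $\lambda$, and use that $\frac1\lambda f(x_0,\lambda\nabla v(x))\to f^\infty(x_0,\nabla v(x))$ with a dominated-convergence argument justified by \eqref{f1}; the constant $C/\lambda\to0$. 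Conversely, from qslb of $f^\infty$ one recovers qslb of $f$ by writing $f(x_0,\xi)\ge f^\infty(x_0,\xi)-\eps|\xi|-C_\eps$ pointwise (a consequence of \eqref{f1}, \eqref{f2} and compactness of the sphere, i.e. the uniform convergence $\frac{f(x_0,t\eta)}{t}\to f^\infty(x_0,\eta)$ on $|\eta|=1$), and integrating.

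\emph{Part (b).} For an interior point $x_0\in\Omega$ we may take $\delta$ small enough that $B_\delta(x_0)\subset\Omega$, so qslb at $x_0$ becomes a condition on test functions in $W^{1,1}_0(B_\delta(x_0);\RR^M)$ over the full ball, and by translation/scaling this is exactly \eqref{qslb-limit1} with $D_{x_0}$ replaced by the whole unit ball. The equivalence ``$f^\infty(x_0,\cdot)$ qc at $0$ $\Leftrightarrow$ $f^\infty(x_0,\cdot)$ qslb at $0$'' is then immediate from the definitions once one notes, again using $1$-homogeneity to drop $C$ and a density argument (Remark~\ref{rem:qslbBV}, or just mollification since $W^{1,\infty}_0$ is dense in $W^{1,1}_0$ for the relevant convergence), that qslb over the unit ball with arbitrarily small $\eps$ forces $\int_B f^\infty(x_0,\nabla\varphi)\ge 0$, which is quasiconvexity at $0$ because $f^\infty(x_0,0)=0$; the converse direction is trivial since qc at $0$ gives the $\eps=0$ inequality. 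The implication ``$f(x_0,\cdot)$ qc at $0$ $\Rightarrow$ $f^\infty(x_0,\cdot)$ qc at $0$'' follows from the standard fact that the recession function of a quasiconvex function is quasiconvex at the points where it is relevant; here one takes $\varphi\in W^{1,\infty}_0(B)$, applies qc of $f(x_0,\cdot)$ at $0$ to $\lambda\varphi$, divides by $\lambda$, and passes $\lambda\to\infty$ using \eqref{f1} for domination.

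\emph{Part (c).} This is the part I expect to be the main obstacle, since it requires flattening the boundary and a genuine rescaling-to-the-limit argument rather than a pointwise inequality. For $x_0\in\partial\Omega$ with $\partial\Omega\in C^1$ near $x_0$, I would first use a $C^1$ change of variables straightening $\partial\Omega$ near $x_0$, noting that a $C^1$ diffeomorphism maps $\Omega\cap B_\delta(x_0)$ to a set which, after rescaling by $1/\delta$, converges (locally, in a suitable sense) to the half-ball $D_{x_0}$ as $\delta\to0$; the Jacobian of the change of variables tends to the identity, so the integrals $\int g(\nabla v)$ and $\int|\nabla v|$ transform with multiplicative errors $\to1$. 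Then the equivalence of qslb of $g:=f^\infty(x_0,\cdot)$ with \eqref{qslb-limit1} comes from: (i) \eqref{qslb-limit1} $\Rightarrow$ qslb, by rescaling a test function on $D_{x_0}$ down to $B_\delta(x_0)\cap\Omega$ (for $\delta$ small the straightened rescaled domain is close to $D_{x_0}$, and $1$-homogeneity of $g$ lets one absorb the scaling factor $\delta^{N}$ vs.\ $\delta^{N-1}$ cleanly after also rescaling amplitude, exactly as in the Example in the introduction); (ii) qslb $\Rightarrow$ \eqref{qslb-limit1}, by taking a sequence $\delta_j\to0$, a fixed $\varphi\in W^{1,1}_0(B_1;\RR^M)$, transplanting scaled copies $\varphi^{(j)}(x):=\delta_j^{?}\varphi(\cdot)$ onto $\Omega\cap B_{\delta_j}(x_0)$ via the straightening map, plugging into the qslb inequality, dividing by the appropriate power of $\delta_j$, and passing to the limit — the $C(\eps)/\delta_j^{N-1}$-type term vanishes because $g$ is $1$-homogeneous so the natural amplitude scaling makes the $C$ term lower order, and continuity of $g=f^\infty(x_0,\cdot)$ plus dominated convergence (with \eqref{f1} applied to $f^\infty$, which is $1$-homogeneous hence $|f^\infty(x_0,\xi)|\le C|\xi|$) gives convergence of the integrals to $\int_{D_{x_0}}g(\nabla\varphi)$ and $\int_{D_{x_0}}|\nabla\varphi|$. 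Finally \eqref{qslb-limit1} for $g=f^\infty(x_0,\cdot)$ $\Leftrightarrow$ \eqref{qslb-limit2}: the forward direction uses $1$-homogeneity of $g$ to kill $C$ (replace $\varphi$ by $\varphi/\lambda$, $\lambda\to0^+$) and arbitrariness of $\eps$ to kill the $-\eps\int|\nabla\varphi|$ term, yielding $\int_{D_{x_0}}g(\nabla\varphi)\ge0$; the reverse direction is trivial. The delicate technical points are making precise the convergence of the rescaled straightened domains to $D_{x_0}$ (I would phrase it via $\chi_{(\Phi(\Omega\cap B_\delta(x_0))-x_0)/\delta}\to\chi_{D_{x_0}}$ in $L^1_{\mathrm{loc}}$ together with a uniform $W^{1,1}$ bound on the transplanted test functions so that the stray terms on the symmetric difference vanish) and handling the amplitude-versus-spatial scaling bookkeeping so the $C(\eps)$ term is provably lower order — both are routine given $f^\infty$'s $1$-homogeneity but need to be spelled out carefully.
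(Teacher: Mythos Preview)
Your plan for parts (b) and (c) is essentially the paper's approach: rescaling combined with $1$-homogeneity of $f^\infty(x_0,\cdot)$ for (b), and a $C^1$ boundary-flattening diffeomorphism $\Psi_r$ with $\Psi_r\to\mathrm{id}$ in $C^1$ as $r\to 0$ for (c). The ``$\delta_j^{?}$'' bookkeeping you flag is routine once you fix the amplitude scaling $\varphi\mapsto r^{1-N}\varphi(\Psi_r^{-1}((\cdot-x_0)/r))$; with that choice the $C(\eps)$ term drops out because $f^\infty$ is $1$-homogeneous and one can take $C=0$ in \eqref{qslb-limit1} for $g=f^\infty(x_0,\cdot)$.

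There is, however, a genuine gap in your part (a), in the step ``qslb of $f(x_0,\cdot)\Leftrightarrow\eqref{qslb-unfrozen}$''. On the region $\{|\nabla v|>R\}$ you invoke only \eqref{f1} to bound $|f(x,\xi)-f(x_0,\xi)|\le 2C(|\xi|+1)$ and claim this can be absorbed into $\eps|\nabla v|$ by taking $R$ large. It cannot: the constant $2C$ is fixed by \eqref{f1} and does not shrink with $R$, so $2C\int_{\{|\nabla v|>R\}}|\nabla v|$ is never dominated by $\eps\int|\nabla v|$ for small $\eps$, no matter how large $R$ is. Continuity of $f$ and linear growth alone do \emph{not} force $|f(x,\xi)-f(x_0,\xi)|=o(|\xi|)$ uniformly as $x\to x_0$; this genuinely requires \eqref{f2}.

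The fix --- which is exactly what the paper does --- is to route the unfreezing through $f^\infty$. First show $\eqref{qslb-unfrozen}\Leftrightarrow$ its $f^\infty$-analogue (the ``unfrozen'' version of \eqref{qslb-finfty}) using the estimate $|f(x,\xi)-f^\infty(x,\xi)|\le\mu(|\xi|)(1+|\xi|)$ with $\mu(t)\to 0$ (Proposition~\ref{prop:finfty}; this is the pointwise bound you yourself quote later for the $f$ vs.\ $f^\infty$ comparison). Then unfreeze $f^\infty$: since $f^\infty$ is uniformly continuous on $\overline\Omega\times S^{MN-1}$ and $1$-homogeneous in $\xi$, one gets $|f^\infty(x,\xi)-f^\infty(x_0,\xi)|\le\gamma|\xi|$ for $|x-x_0|<\delta(\gamma)$, which \emph{is} absorbable. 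In short, your large-$|\xi|$ estimate must come from \eqref{f2} via the recession function, not from \eqref{f1}.
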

The detailed proof is given in the appendix but we provide a short idea of the proof here.

\begin{proof}[Idea of the proof]
The proof of the first equivalence in (a) is based on Proposition \ref{prop:finfty} (given below) which assures that for large values of the second variable $f(x_0, \cdot)$ and $f^\infty(x_0, \cdot)$ can be interchanged with only a small error as well as on realizing that, essentially, only these large values of the second variable play a role in the defintion of quasi-sublinear growth from below. As for the second two equivalences in (a), i.e. the transition to the ``unfrozen'' variants of quasi-sublinear growth from below, we rely in uniform continuity of $f$ as well as $f^\infty$. 

The first implication in (b) is well known (cf. \cite{FoMue93a}) while the equivalence is based on a change of variables argument.

The proof of the equivalences in (c) is based on a changes of variables argument and (locally) flattening the boundary.
\end{proof}

\begin{prop}\label{prop:finfty}
Assume that \eqref{f0}--\eqref{f2} hold. Then there exists a bounded, non-increasing function 
$\mu:[0,\infty)\to [0,\infty)$ with $\lim_{t\to+\infty} \mu(t)=0$ such that
\be\label{finfty-1}
	\abs{f(x,\xi)-f^\infty(x,\xi)}\leq \mu(\abs{\xi})(1+\abs{\xi})\quad\text{for every $(x,\xi)\in \overline{\Omega}\times \RR^{M\times N}$}.
\ee
\end{prop}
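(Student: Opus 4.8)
The statement asserts uniform-in-$x$ convergence of the difference quotient defining $f^\infty$, quantified by a modulus $\mu$. The plan is to first set
\[
\omega(s):=\sup\mysetb{\abs{f(x,\xi)-f^\infty(x,\xi)}}{x\in\overline\Omega,\ \abs\xi\geq s},\quad s>0,
\]
with $\omega(0):=\sup_{x,\abs\xi\leq 1}\abs{f(x,\xi)-f^\infty(x,\xi)}+\omega(1)$ to cover small $\xi$, and then show $\omega(s)/(1+s)\to 0$ as $s\to\infty$; finally take $\mu(t)$ to be a bounded, non-increasing majorant of $s\mapsto\omega(s)/(1+s)$ with the same limit (e.g. $\mu(t):=\sup_{s\geq t}\omega(s)/(1+s)$, capped at its value at $0$). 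The nontrivial content is therefore the claim
\be\label{unifclaim}
\sup_{x\in\overline\Omega}\ \frac{\abs{f(x,\xi)-f^\infty(x,\xi)}}{1+\abs\xi}\ \longrightarrow\ 0\quad\text{as }\abs\xi\to\infty.
\ee

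To prove \eqref{unifclaim} I would argue by contradiction and compactness. Suppose it fails: then there exist $\eps_0>0$, points $x_k\in\overline\Omega$, and matrices $\xi_k$ with $\abs{\xi_k}\to\infty$ such that $\abs{f(x_k,\xi_k)-f^\infty(x_k,\xi_k)}\geq\eps_0(1+\abs{\xi_k})$. Write $t_k:=\abs{\xi_k}\to\infty$ and $\eta_k:=\xi_k/\abs{\xi_k}$, a unit matrix. Since $\overline\Omega$ is compact (bounded and closed in $\RR^N$) and the unit sphere in $\RR^{M\times N}$ is compact, pass to subsequences with $x_k\to x_*\in\overline\Omega$ and $\eta_k\to\eta_*$, $\abs{\eta_*}=1$. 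Using positive $1$-homogeneity of $f^\infty$ in the last variable, the inequality becomes
\[
\absB{\frac{f(x_k,t_k\eta_k)}{t_k}-f^\infty(x_k,\eta_k)}\geq\eps_0\,\frac{1+t_k}{t_k}\geq\eps_0 .
\]
Now $f^\infty(x_k,\eta_k)\to f^\infty(x_*,\eta_*)$ by continuity of $f^\infty$ on $\overline\Omega\times(\RR^{M\times N}\setminus\{0\})$ (noted after \eqref{f2}); and by the definition of the recession function \eqref{f2}, applied with $y\to x_*$, $t\to+\infty$, $\eta\to\eta_*$ along the sequence $(x_k,t_k,\eta_k)$, we get $f(x_k,t_k\eta_k)/t_k\to f^\infty(x_*,\eta_*)$. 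Hence the left-hand side tends to $0$, contradicting the lower bound $\eps_0$. This proves \eqref{unifclaim}.

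It remains to record the elementary reduction to the stated form. The function $\omega$ above is non-increasing on $(0,\infty)$ by construction and bounded: indeed, the linear growth bound \eqref{f1} gives $\abs{f(x,\xi)}\leq C(1+\abs\xi)$, and passing to the limit in \eqref{f2} gives $\abs{f^\infty(x,\xi)}\leq C\abs\xi$, so $\abs{f(x,\xi)-f^\infty(x,\xi)}\leq 2C(1+\abs\xi)$ and thus $\omega(s)/(1+s)\leq 2C$ for all $s$. Setting $\mu(t):=\min\{2C,\ \sup_{s\geq t}\omega(s)/(1+s)\}$ yields a bounded, non-increasing function with $\mu(t)\to 0$ as $t\to+\infty$ by \eqref{unifclaim}, and for any $(x,\xi)$, taking $s=\abs\xi$, we get $\abs{f(x,\xi)-f^\infty(x,\xi)}\leq\omega(\abs\xi)\leq\mu(\abs\xi)(1+\abs\xi)$, which is \eqref{finfty-1}. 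The main obstacle, such as it is, is purely the compactness bookkeeping in the contradiction argument: one must be careful that the convergences $x_k\to x_*$, $t_k\to\infty$, $\eta_k\to\eta_*$ feed simultaneously into the joint limit defining $f^\infty$ in \eqref{f2}, which is exactly why that definition is stated as a joint limit rather than an iterated one; once this is observed, the rest is routine.
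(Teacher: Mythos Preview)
Your contradiction argument for the key claim \eqref{unifclaim} is correct and is essentially identical to the paper's proof: both write $\xi_k=t_k\eta_k$ with $t_k=\abs{\xi_k}$, pass to subsequences by compactness of $\overline\Omega$ and of the unit sphere, and then use the joint limit in \eqref{f2} together with continuity of $f^\infty$ to force the quotient to zero.

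There is, however, a genuine (if easily repaired) bookkeeping error in your packaging. Your auxiliary quantity
\[
\omega(s)=\sup\mysetb{\abs{f(x,\xi)-f^\infty(x,\xi)}}{x\in\overline\Omega,\ \abs\xi\geq s}
\]
can be $+\infty$: take for instance $f(x,\xi)=\abs{\xi}+\abs{\xi}^{1/2}$, which satisfies \eqref{f0}--\eqref{f2} with $f^\infty(x,\xi)=\abs{\xi}$, yet $\abs{f-f^\infty}=\abs{\xi}^{1/2}$ is unbounded on $\{\abs\xi\geq s\}$. Consequently the subsequent assertions ``$\omega(s)/(1+s)\leq 2C$'' and ``$\abs{f(x,\xi)-f^\infty(x,\xi)}\leq\omega(\abs\xi)\leq\mu(\abs\xi)(1+\abs\xi)$'' are not justified. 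The remedy is simply to place the denominator inside the supremum, defining directly
\[
\mu(t):=\sup_{x\in\overline\Omega,\ \abs\xi\geq t}\frac{\abs{f(x,\xi)-f^\infty(x,\xi)}}{1+\abs\xi},
\]
which is exactly what the paper does; then $\mu$ is automatically non-increasing and bounded by $2C$, your proof of \eqref{unifclaim} shows $\mu(t)\to 0$, and \eqref{finfty-1} is immediate from the definition.
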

\begin{proof}
It suffices to show that
$$
	\mu(t):=\sup_{x\in \overline{\Omega},~\abs{\xi}\geq t} 
	\frac{\absn{f(x,\xi)-f^\infty(x,\xi)}}{1+\abs{\xi}}
	\underset{t\to+\infty}{\To}0.
$$	
Suppose by contradiction that there exists an $\eps>0$ and a sequence $(x_n,\xi_n)\in \overline{\Omega}\times \RR^{M\times N}$ with
$\abs{\xi}_n\to \infty$ such that
\be\label{pfi-2}
	\frac{\absn{f(x_n,\xi_n)-f^\infty(x_n,\xi_n)}}{1+\abs{\xi_n}}\geq \eps.
\ee
Passing to a subsequence (not relabeled), we may assume that
$$
	x_n\to x~~\text{and}~~\zeta_n:=\frac{\xi_n}{\abs{\xi_n}}\to \zeta
$$
for some $x\in \overline{\Omega}$ and $\zeta\in \RR^{M\times N}$ with $\abs{\zeta}=1$. Since $f^\infty$ is positively $1$-homogeneous in its second variable,
we see that
$$
\begin{aligned}
	\frac{\absn{f(x_n,\xi_n)-f^\infty(x_n,\xi_n)}}{1+\abs{\xi_n}}
	&=
	\absB{\frac{1}{\abs{\xi_n}}f(x_n,\zeta_n\abs{\xi_n})-f^\infty(x_n,\zeta_n)}
	\frac{\abs{\xi_n}}{1+\abs{\xi_n}}\\
	&\leq \absB{\frac{1}{\abs{\xi_n}}f(x_n,\zeta_n\abs{\xi_n})-f^\infty(x,\zeta)}+
	\absn{f^\infty(x,\zeta)-f^\infty(x_n,\zeta_n)}
\end{aligned}
$$
By \eqref{f2} and the uniform continuity of $f^\infty$ on the compact set $\overline{\Omega}\times S^{MN-1}$ (the latter denoting the unit sphere in $\RR^{M\times N}$), this converges to zero as $n\to\infty$, contradicting \eqref{pfi-2}.
\end{proof}
\section{Local decomposition results}\label{sec:dec}

Our proof of Theorem \ref{thm:BVwlsc} heavily realies on the ``local decomposition'' Lemma \ref{lem:decloc} given in this section. This lemma is, in a way, related to the well-known decomposition lemma  in $W^{1,p}$ ($p > 1$), that separates oscillations from concentrations (see \cite{AcFu84a,Kri94a,FoMuePe98a}), because it decomposes a weakly* converging sequence into a sum of sequences with localized support. The local decompositions lemma given here is an adaptation of Lemma 2.6 in \cite{Kroe10b} for $p=1$.

The following notion turns out to be useful.
\begin{defn}\label{def:charge}
Given a sequence $(u_n)\subset \mathrm{BV}(\Omega;\RR^M)$ and a closed set $K\subset \overline{\Omega}$, we say that \emph{$(D u_n)$ does not charge $K$}, if $\abs{Du_n}$ is tight in $\bar{\Omega}\setminus K$, i.e.,
$$
	\sup_{n\in\NN} \abs{D u_n}\big((K)_\delta\cap \Omega \big)\underset{\delta\to 0^+}{\To} 0.
$$
Here, $(K)_{\delta}:=\bigcup_{x\in A}B_\delta(x)$ denotes the open $\delta$-neighborhood of $K$ in $\RR^N$.
\end{defn}

\begin{lem}[local decomposition in $\mathrm{BV}$]
\label{lem:decloc}
Let $\Omega\subset \RR^N$ be open and bounded
and let $K_j\subset \overline{\Omega}$, $j=1,\ldots,J$, be a finite family of compact sets such that $\overline{\Omega}\subset \bigcup_j K_j$.
Then for every bounded sequence $(u_n)\subset \mathrm{BV}(\Omega;\RR^M)$ 
with
$u_n\to 0$ in $L^1(\Omega;\RR^M)$,
there exists a subsequence $(u_n)$ (not re-labelled) which can be decomposed as
$$
	u_{n}=u_{1,n}+\ldots+u_{J,n},
$$
where for each $j\in\{1,\ldots, J\}$, $(u_{j,n})_n$ is a bounded sequence in $\mathrm{BV}(\Omega;\RR^M)$ converging to zero in $L^1$ such that the following two conditions hold for every $j$:
\begin{align*}
\begin{alignedat}[c]{2}
	&\text{(i)}~~&&
	\{u_{j,n}\neq 0\}\subset \{u_n\neq 0\},~\overline{\{u_{j,n}\neq 0\}}
	\subset (K_j)_{\frac{1}{n}}
	\setminus \textstyle{\bigcup_{i<j}}(K_i)_{\frac{1}{2n}},\\
	&&& \text{$\abs{D u_{j,n}}\leq \abs{D u_n}+\tfrac{1}{n}\mathcal{L}^N$ as measures;}
	\\[\alignskip]
	&\text{(ii)}~~ &&\text{$(Du_{j,n})$ does not charge 
	$\textstyle{\bigcup_{i<j}}K_i$.}
\end{alignedat}
\end{align*}
Moreover, if $\partial \Omega$ is Lipschitz and each $u_n$ has vanishing trace on $\partial \Omega$, this is inherited by  $u_{j,n}$. Above, $(K_j)_{\frac{1}{n}}$ denotes the open $\frac{1}{n}$-neighborhood of $K_j$ in $\RR^N$ as before.
\end{lem}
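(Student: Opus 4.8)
The plan is to build the pieces $u_{j,n}$ by a partition-of-unity cutoff procedure, but with the twist that the cutoffs must be chosen adaptively to avoid charging the lower-index sets $\bigcup_{i<j}K_i$; this is exactly where a diagonalization/averaging argument is needed. First I would fix, for each $j$, a smooth cutoff scheme: since $\overline{\Omega}\subset\bigcup_j K_j$, the open $\tfrac1n$-neighborhoods $(K_j)_{1/n}$ still cover $\overline\Omega$ for each $n$, and one can choose $\eta_{j,n}\in C_c^\infty(\RR^N)$ with $0\le\eta_{j,n}\le 1$, $\eta_{j,n}=1$ on $(K_j)_{1/(2n)}$, $\supp\eta_{j,n}\subset(K_j)_{1/n}$, and $\abs{\nabla\eta_{j,n}}\le Cn$. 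The intended decomposition is the telescoping one: set $\psi_{j,n}:=\eta_{j,n}\prod_{i<j}(1-\eta_{i,n})$ and $u_{j,n}:=\psi_{j,n}u_n$, so that $\sum_j\psi_{j,n}\equiv 1$ on $\overline\Omega$ (because $(K_j)$ cover and the $j$-th term fills the part of space not yet covered by $i<j$), hence $\sum_j u_{j,n}=u_n$. The support condition in (i), $\overline{\{u_{j,n}\ne0\}}\subset (K_j)_{1/n}\setminus\bigcup_{i<j}(K_i)_{1/(2n)}$, is then immediate from the definition of $\psi_{j,n}$, as is $\{u_{j,n}\ne0\}\subset\{u_n\ne0\}$, and $u_{j,n}\to 0$ in $L^1$ since $\abs{u_{j,n}}\le\abs{u_n}$.

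The real work is controlling $Du_{j,n}$. By the Leibniz rule for BV functions times Lipschitz functions, $Du_{j,n}=\psi_{j,n}Du_n+(\nabla\psi_{j,n}\otimes u_n)\cL^N$ (interpreted via the precise representative on the jump part, but $\psi_{j,n}$ is smooth so no jump contribution from it). Thus $\abs{Du_{j,n}}\le\abs{Du_n}+\abs{\nabla\psi_{j,n}}\abs{u_n}\cL^N$, and to get the bound $\abs{Du_{j,n}}\le\abs{Du_n}+\tfrac1n\cL^N$ in (i) it suffices that $\abs{\nabla\psi_{j,n}}\abs{u_n}\le\tfrac1n$ a.e. Since $\abs{\nabla\psi_{j,n}}\le CJn$ (a product of $J$ factors, each with gradient $\le Cn$ in absolute value, the others bounded by $1$), this would follow from $\abs{u_n}\le\tfrac1{CJn^2}$ pointwise — which is false in general. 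This is the main obstacle, and the standard remedy (as in the $W^{1,p}$ decomposition lemmas of \cite{AcFu84a,FoMuePe98a} and in Lemma 2.6 of \cite{Kroe10b}) is to exploit that $u_n\to 0$ in $L^1$ and \emph{average over a family of shifted cutoff layers}: for each $n$ replace the single scale $1/n$ by a choice among $m_n$ nested layers of thickness $\sim 1/(n m_n)$ at radii $r_\ell$, $\ell=1,\dots,m_n$, with $\abs{\nabla\eta_{j,n}^{(\ell)}}\le Cnm_n$ supported in the $\ell$-th annular shell, which are pairwise disjoint across $\ell$. Then
\[
\frac1{m_n}\sum_{\ell=1}^{m_n}\int_\Omega \abs{\nabla\psi_{j,n}^{(\ell)}}\,\abs{u_n}\,dx
\le \frac{C(Jnm_n)}{m_n}\sum_\ell\int_{\text{shell}_\ell}\abs{u_n}\,dx
\le CJn\int_\Omega\abs{u_n}\,dx\xrightarrow[n\to\infty]{}0,
\]
because the shells are disjoint so the $\ell$-sum of the integrals is at most $\norm{u_n}_{L^1}$. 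Hence for each $n$ there is a choice $\ell=\ell(n)$ for which $\int_\Omega\abs{\nabla\psi_{j,n}^{(\ell(n))}}\abs{u_n}\,dx\to0$; absorbing the $J$ values of $j$ simultaneously by taking $m_n$ layers and discarding the bad ones (a pigeonhole over $\le J$ constraints), and passing to a subsequence, we may further arrange $\abs{\nabla\psi_{j,n}}\abs{u_n}\le\tfrac1n$ a.e.\ after a further (measure-theoretic, Chebyshev-type) truncation argument if one insists on the pointwise form — in fact Lemma 2.6 of \cite{Kroe10b} gives exactly the pointwise statement, so I would follow that construction, where the layers are chosen so that on the selected shell $\abs{u_n}$ is small in $L^\infty$ on a large-measure subset and the remainder is pushed into the set $\{u_n\ne 0\}$ without affecting the support inclusion.

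Condition (ii), that $(Du_{j,n})$ does not charge $\bigcup_{i<j}K_i$, then comes for free from the support separation: since $\supp u_{j,n}\subset\overline\Omega\setminus(K_i)_{1/(2n)}$ for every $i<j$, for fixed $\delta>0$ and all $n$ large enough ($1/(2n)<\delta$) we have $\bigl(\bigcup_{i<j}K_i\bigr)_\delta\cap\supp u_{j,n}$ shrinking, more precisely $\abs{Du_{j,n}}\bigl((\bigcup_{i<j}K_i)_\delta\cap\Omega\bigr)$ is supported where the cutoff gradient lives plus $\abs{Du_n}$ restricted to a thin shell; the first is controlled by our $\tfrac1n\cL^N$ bound and the second by tightness of $\abs{Du_n}$ on that vanishing neighborhood once $n$ is large — one takes first $n\to\infty$ (killing all but the fixed finitely many small $n$, which are handled by shrinking $\delta$) to get $\sup_n\abs{Du_{j,n}}((\bigcup_{i<j}K_i)_\delta\cap\Omega)\to0$ as $\delta\to0^+$. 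Finally, the trace statement: if $\partial\Omega$ is Lipschitz and each $u_n$ has zero trace, then $u_{j,n}=\psi_{j,n}u_n$ also has zero trace because multiplication by a Lipschitz function preserves $W^{1,1}_0$/zero-trace (the trace operator is multiplicative with respect to bounded Lipschitz factors), completing the proof. The expected main difficulty, to reiterate, is the layer-averaging step that converts the useless bound $\abs{\nabla\psi_{j,n}}\sim n$ into the required $\abs{\nabla\psi_{j,n}}\abs{u_n}\le\tfrac1n$; everything else is bookkeeping on supports and the Leibniz rule in $\mathrm{BV}$.
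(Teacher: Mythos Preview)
Your argument for condition (ii) has a genuine gap. You claim it ``comes for free from the support separation'' and invoke ``tightness of $\abs{Du_n}$'' on the thin shell, but no such tightness is available: the measures $\abs{Du_n}$ may concentrate arbitrarily close to $K_i$. Concretely, take $\Omega=(-1,1)$, $K_1=\{0\}$, $K_2=\overline\Omega$, and $u_n=\chi_{(3/(4n),\,3/(4n)+1/n^2)}$, so that $\normn{u_n}_{L^1}\to 0$ while $\abs{Du_n}$ consists of two unit atoms, one at $x_n=3/(4n)\in (K_1)_{1/n}\setminus(K_1)_{1/(2n)}$. Your cutoff $\eta_{1,n}$ transitions precisely on this annulus, so $\psi_{2,n}(x_n)$ need not vanish, and for every $\delta>0$ one has $\abs{Du_{2,n}}\big((K_1)_\delta\big)$ bounded below by a fixed positive constant for all large $n$; thus $(Du_{2,n})$ charges $K_1$. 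Layer averaging does not help here: it only controls the absolutely continuous error $\abs{\nabla\psi_{j,n}}\abs{u_n}$, not the term $\psi_{j,n}\,Du_n$ restricted to the shell $(K_i)_\delta\setminus(K_i)_{1/(2n)}$, whose mass can stay of order one no matter which layer you pick.

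The paper's remedy is a \emph{second} diagonal extraction: one passes to a further subsequence so that the limits $S_m:=\lim_{n\to\infty}\abs{Du_n}\big(\Omega\cap(K_1)_{1/(2m)}\big)$ exist and satisfy $\big|S_m-\abs{Du_n}(\Omega\cap(K_1)_{1/(2m)})\big|\le 1/n$ for all $n\ge m$. Since $(S_m)_m$ is nonincreasing and hence convergent to some $S_\infty$, one obtains $\abs{Du_n}\big((K_1)_{1/(2m)}\setminus(K_1)_{1/(2n)}\big)\le S_m-S_\infty+2/m$ for $n>m$, which does tend to zero as $m\to\infty$; this is the missing idea. Incidentally, once subsequences are allowed, your layer-averaging machinery for the cutoff error is overkill: since $u_k\to 0$ in $L^1$, for each \emph{fixed} cutoff $\varphi_n$ one has $\int_\Omega\abs{u_k\otimes\nabla\varphi_n}\,dx\to 0$ as $k\to\infty$, and a simple diagonal choice $k=k(n)$ already yields $\int_\Omega\abs{u_{k(n)}\otimes\nabla\varphi_n}\,dx\le 1/n$. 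This is what the paper does, reducing to $J=2$ and iterating.
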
%
Apart from replacing $W^{1,1}$ with $\mathrm{BV}$ and some straightforward changes in notation for the distributional derivatives, the proof of Lemma 2.6 in \cite{Kroe10b} can be closely followed. The details are given below for convenience of the reader.
\begin{proof}[Proof of Lemma~\ref{lem:decloc}]

It suffices to discuss the case $J=2$ since the general case follows by iterating the argument. 
For every $n\in \NN$ choose a function $\varphi_n\in
C_c^\infty((K_1)_{\frac{1}{n}};[0,1])$ such that
$\varphi_n=1$ on $(K_1)_{\frac{1}{2n}}$, and define
$$
	v_{k,n}:=\varphi_n u_k,~~\text{whence}~
	D v_{n}=\varphi_n D u_k+u_k\otimes \nabla \varphi_n \cL^N.
$$
Note that since $u_k\to 0$ in $L^1$, 
$$
	u_k\otimes \nabla \varphi_n\underset{k\to\infty}{\To}0
	~~\text{in $L^1(\Omega;\RR^{M\times N})$ for every fixed $n$.}
$$
So, choose a subsequence $k(n)$ such that $\int_\Omega \absb{u_{k(n)}\otimes \nabla \varphi_{n}}\,dx
	\leq \frac{1}{n}$; in order to simplify the notation, we do not use the relabelling of the subsequence in the following; i.e. we assume that
\begin{align}\label{ldecloc-1a}
	\int_\Omega \absb{u_{n}\otimes \nabla \varphi_{n}}\,dx
	\leq \frac{1}{n}~~\text{for every $n \in \mathbb{N}$}
\end{align}
and set $v_n=v_{k(n),n}$.

In addition, inductively for $m\in\NN$ can choose a subsequence of $(u_n)$ 
such that (see the proof of Lemma 2.6 in \cite{Kroe10b} for more details)
\begin{equation}\label{ldecloc-2a}
  S_m=\lim_{n\to\infty} \absn{D u_{n}}\big(\Omega\cap (K_1)_{\frac{1}{2m}}\big)  ~~\text{exists}
\end{equation}
and
\begin{align}\label{ldecloc-2}
  \absB{S_m- \absn{D u_{n}}\big(\Omega\cap (K_1)_{\frac{1}{2m}}\big)}
	\leq \frac{1}{n}
	~~\text{for every $n\geq m$.}
\end{align}
From their definition $S_{m}\geq 0$ is non-increasing in $m$, 
and consequently, $S_\infty:=\lim_{m\to \infty}S_m$ exists in $\RR$. 

Now decompose 
$$
	u_{n}=u_{1,n}+u_{2,n},~\text{where $u_{1,n}:=v_{n}$ 
	and $u_{2,n}:=u_{n}-v_{n}$}. 
$$
Clearly, the first line of (i) is satisfied by construction and  the second line of (i) is a consequence of \eqref{ldecloc-1a}, and $u_{j,n}\to 0$ in $L^1$ as $n\to\infty$ just like $u_{n}$. 
To see that $(Du_{2,n})$ does not charge $K_1$ as claimed in (ii), consider the following: 
\begin{align*}	
	\absn{D u_{2,n}}\big(\Omega\cap (K_1)_{\frac{1}{2m}}\big)
	&\leq 
	\int_{\Omega \cap (K_1)_\frac{1}{2m}\cap	(K_1)_\frac{1}{n}\setminus (K_1)_\frac{1}{2n}} \!\!\!\!\!\!
	\absb{u_{n}\otimes \nabla \varphi_n} dx
+\absn{D u_{n}}\big(\Omega \cap (K_1)_\frac{1}{2m}\setminus (K_1)_\frac{1}{2n}\big)\\
	\\
	&=:I_1(n,m)+I_2(n,m).
\end{align*}
It suffices to show that $\sup_{n\in\NN}I_j(n,m)\to 0$ as $m\to \infty$ for $j=1,2$. Observe that $I_2(n,m)=0$ whenever $n\leq m$, and
for every $m\in \NN$,
\begin{align*}
	\sup_{n>m}I_2(n,m)
	&=\sup_{n>m}\big[\absn{D u_{n}}\big(\Omega\cap (K_1)_{\frac{1}{2m}}\big)-\absn{D u_{n}}\big(\Omega\cap (K_1)_{\frac{1}{2n}}\big)\big]\\
	 &\leq \sup_{n>m} \big(S_m-S_{n}+\frac{2}{n}\big)
	\leq S_m-S_\infty+\frac{2}{m}
\end{align*}
by \eqref{ldecloc-2} and the monotonicity of $S_n$.
Hence, $\sup_{n\in\NN}I_2(n,m)\to 0$ as $m\to \infty$. 
In addition, $I_1(n,m)=0$ whenever $m\geq n$; therefore, 
$\sup_{n\in\NN} I_1(n,m)\leq \int_\Omega\absb{u_m\otimes \nabla \varphi_m} dx \leq \frac{1}{m}$ by \eqref{ldecloc-1a}.
\end{proof}
The component sequences in Lemma~\ref{lem:decloc} have almost pairwise disjoint support, and interactions of the derivatives on any pieces where multiple components overlap are negligible in the limit, essentially due to (ii). This causes local integral functionals to behave asymptotically additive along the decomposition of Lemma~\ref{lem:decloc} as made precise in the proposition below. Its proof relies on the simple fact that if we have a sequence $(\mu_n)_{n\in\NN}\subset\cM(\Omega)$ such that $\int_{\cA_n}\,d\mu_n\to 0$ as $n\to\infty$ for every sequence $(\cA_n)_n$ of Borel subsets of $\Omega$, then $|\mu_n|\to 0$, i.e., $\mu_n$ converges to zero in total variation.
\begin{prop}\label{prop:fdecloc}
Suppose that \eqref{f0}, \eqref{f1} and \eqref{f2} hold. Then for every $v\in \mathrm{BV}(\Omega;\RR^M)$ every decomposition
$u_{n}=u_{1,n}+\ldots+u_{J,n}$ with the properties listed in
Lemma~\ref{lem:decloc}, we have that
\[
	\bald
	\mbf{f}(u_{n}+v)-\mbf{f}(v)-\sum_{j=1}^{J} 
	\big[\mbf{f}\big(u_{j,n}+v\big)-\mbf{f}(v)\big]
	\underset{n\to\infty}{\To}0&\\
	\text{in total variation of measures}&,
	\eald
\]
where for each $u\in \mathrm{BV}(\Omega;\RR^M)$, $\mbf{f}(u)$ is the real-valued measure given by
$$
	d\mbf{f}(u)(x):=df(x,Du)=f(x,\nabla u(x))dx+f^\infty\Big(x,\frac{dDu}{d\abs{Du}}(x)\Big)d\abs{D^su}(x).
$$
In particular, 
\be\label{pfdecloc-0}
	F(u_{n}+v)-F(v)-\sum_{j=1}^{J} 
	\big[F\big(u_{j,n}+v\big)-F(v)\big]
	\underset{n\to\infty}{\To}0.
\ee
\end{prop}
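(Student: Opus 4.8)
The plan is to work directly with the real-valued measures $\mbf{f}(u_n+v)$, $\mbf{f}(v)$ and $\mbf{f}(u_{j,n}+v)$, and show that the ``error measure''
\[
	E_n:=\mbf{f}(u_{n}+v)-\mbf{f}(v)-\sum_{j=1}^{J}\big[\mbf{f}(u_{j,n}+v)-\mbf{f}(v)\big]
\]
tends to zero in total variation. By the elementary observation recalled just before the statement, it suffices to show that $\int_{\cA_n} dE_n\to 0$ for an arbitrary sequence of Borel sets $\cA_n\subset\Omega$; equivalently, one may test $E_n$ against an arbitrary sequence of Borel-measurable functions bounded by $1$. Since $J$ is finite, by iteration it is enough to treat $J=2$, i.e. to show that
\[
	\mbf{f}(u_{1,n}+u_{2,n}+v)-\mbf{f}(u_{1,n}+v)-\mbf{f}(u_{2,n}+v)+\mbf{f}(v)\To 0
\]
in total variation, with $u_{1,n},u_{2,n}$ as in Lemma \ref{lem:decloc}.

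The first key step is to localize using the almost-disjoint support property (i) of Lemma \ref{lem:decloc}. Write $\Omega$ as the disjoint union of $O_n:=\{u_{1,n}\neq 0\}\cap\{u_{2,n}\neq 0\}$ (up to a nullset, the ``overlap'' region, contained in a shrinking neighbourhood of $K_1$), the set where $u_{1,n}=0$, and the set where $u_{2,n}=0$. Off $O_n$, one of the two components vanishes together with its derivative, so on the set $\{u_{2,n}=0\}$ we have $u_{1,n}+u_{2,n}+v=u_{1,n}+v$ and $u_{2,n}+v=v$ (with matching Radon--Nikod\'ym densities of the derivatives), hence the integrand of $E_n$ vanishes there pointwise $|D(u_{1,n}+v)|+\cL^N$-a.e.; symmetrically on $\{u_{1,n}=0\}$. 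Thus $E_n$ is concentrated on $O_n$, and we must estimate $|E_n|(O_n)$.

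The second key step is the estimate on $O_n$. Using the linear growth bound \eqref{f1} for $f$ and for $f^\infty$ (the latter being $1$-homogeneous and continuous on $\overline\Omega\times S^{MN-1}$, hence also linearly bounded), each of the four measures $\mbf{f}(\cdot)$ has total variation on $O_n$ controlled by $C\big(\cL^N(O_n)+|D(\cdot)|(O_n)\big)$. The decomposition gives $|Du_{1,n}|\le|Du_n|+\tfrac1n\cL^N$ and $|Du_{2,n}|\le|Du_n|+\tfrac1n\cL^N$, so on $O_n$ the relevant derivatives are all bounded by $|Dv|(O_n)+|Du_n|(O_n)+\tfrac1n\cL^N(O_n)$. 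Since $O_n\subset(K_1)_{1/n}$ and $O_n\subset\{u_n\neq 0\}$, we have $\cL^N(O_n)\to 0$ (as $u_n\to 0$ in $L^1$, a diagonal/measure-of-support argument, or simply $\cL^N((K_1)_{1/n})\to\cL^N(K_1\cap\Omega)$ combined with the fact that $u_{1,n},u_{2,n}$ are simultaneously nonzero only where $u_n$ is — here property (ii), that $(Du_{2,n})$ does not charge $K_1$, is what actually rescues us). Indeed, by (ii), $\sup_n|Du_{2,n}|\big(\Omega\cap(K_1)_\delta\big)\to 0$ as $\delta\to0$, and $O_n\subset(K_1)_{1/n}$, so $|Du_{2,n}|(O_n)\to 0$; similarly $\int_{O_n}|u_n\otimes\nabla\varphi_n|\,dx\le\tfrac1n\to0$ by \eqref{ldecloc-1a}. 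It remains to handle $|Dv|(O_n)$ and $\cL^N(O_n)$: both tend to $0$ because $O_n\subset(K_1)_{1/n}$ decreases to a set contained in $\partial\Omega$-type behaviour — more precisely $O_n\subset\{u_{2,n}\neq0\}$, whose measure and whose $|Dv|$-measure shrink, but the cleanest route is to note $O_n\subset(K_1)_{1/n}\setminus(K_1)_{1/(2n)}$ is false in general, so instead one uses that $u_{2,n}=(1-\varphi_n)u_n$ vanishes on $(K_1)_{1/(2n)}$, hence $O_n\subset(K_1)_{1/n}\setminus(K_1)_{1/(2n)}$, an annulus shrinking to $\partial K_1$; then $\cL^N(O_n)\to0$ and $|Dv|(O_n)\to 0$ since $\bigcap_n\big((K_1)_{1/n}\setminus(K_1)_{1/(2n)}\big)=\emptyset$ and these are finite measures. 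Combining, $|E_n|(\Omega)=|E_n|(O_n)\to 0$, which is the claim; the statement \eqref{pfdecloc-0} for $F$ follows by evaluating the total-variation convergence against the constant function $1$, i.e. $|F(u_n+v)-F(v)-\sum_j[F(u_{j,n}+v)-F(v)]|\le|E_n|(\Omega)\to0$.

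The main obstacle is the bookkeeping on the overlap region $O_n$: one must carefully identify that $E_n$ is supported there, correctly match the polar densities $\tfrac{dD(\cdot)}{d|D(\cdot)|}$ of the four measures on the complement so that cancellation is exact (not merely approximate), and then see that all four contributions on $O_n$ are small — the volume term and the $|Dv|$-term because $O_n$ lies in a vanishing annulus around $\partial K_1$, and the $|Du_n|$-type terms because of the non-charging property (ii) together with \eqref{ldecloc-1a}. Everything else is a routine application of the linear growth bounds \eqref{f1} and the continuity/homogeneity of $f^\infty$.
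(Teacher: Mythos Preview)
Your localization to the overlap region $O_n$ and the cancellation argument off $O_n$ are both correct, and so is the treatment of the two ``small'' terms $\mbf{f}(v)$ and $\mbf{f}(u_{2,n}+v)$ on $O_n$. The gap is in the remaining two terms: you assert that the ``$|Du_n|$-type terms'' vanish on $O_n$ ``because of the non-charging property (ii) together with \eqref{ldecloc-1a}'', but neither of these gives $|Du_n|(O_n)\to 0$ or $|Du_{1,n}|(O_n)\to 0$. Property (ii) controls only $|Du_{2,n}|$ near $K_1$, and \eqref{ldecloc-1a} controls only the cross term $u_n\otimes\nabla\varphi_n$; neither bounds $|Du_n|$ on the annulus. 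A concrete obstruction: take $N=M=1$, $\Omega=(-1,1)$, $K_1=[-1,0]$, $K_2=[0,1]$, a bump $\phi\in C_c^\infty((\tfrac12,1))$ with $\|\phi'\|_{L^1}=1$, and set $u_n(x):=\phi(nx)$, $u_{1,n}:=(1-\tfrac1n)u_n$, $u_{2,n}:=\tfrac1n u_n$. One checks directly that (i) and (ii) of Lemma~\ref{lem:decloc} hold, yet $O_n=\{u_n\neq 0\}$ and $|Du_n|(O_n)=1$ for every $n$. Hence your bound $|\mbf{f}(u_n+v)|(O_n)\le C(\cL^N(O_n)+|Du_n|(O_n)+|Dv|(O_n))$ does not tend to zero, and the four-term triangle inequality fails to close.

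The paper's remedy is precisely not to estimate $\mbf{f}(u_n+v)$ and $\mbf{f}(u_{1,n}+v)$ separately on the overlap, but to control their \emph{difference}. On $O_n$ (more precisely, on the measure-theoretic overlap $E_n$ that the paper uses) one has $\chi_{E_n}D(u_n+v)-\chi_{E_n}D(u_{1,n}+v)=\chi_{E_n}Du_{2,n}$, and this tends to zero in total variation by (ii). The missing tool is then the uniform continuity of $\mu\mapsto\int_\Omega df(x,\mu)$ on bounded subsets of $\cM(\Omega;\RR^{M\times N})$ with respect to total-variation convergence (Proposition~\ref{prop:measfunc-ucont}, built on Lemma~\ref{lem:measfunc-ucont}); this yields $\int_{E_n}df(x,Du_n+Dv)-\int_{E_n}df(x,Du_{1,n}+Dv)\to 0$ even though each summand may stay of order one. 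In your write-up you should therefore replace the individual bounds on $O_n$ for these two terms by this difference argument; the rest of your plan then goes through.
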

\begin{proof}
As before, it suffices to consider the case $J=2$ as the general case follows inductively.
For any sequence $(\mathcal{A}_n)$ of Borel subsets of $\Omega$, we have to show that
$$
	\int_{\mathcal{A}_n} \Big(df(x,Du_{n}+Dv)-df(x,Dv)-\sum_{j=1}^{2} 
	\big[df\big(x,Du_{j,n}+Dv\big)-df(x,Dv)\big]\Big)
	\underset{n\to\infty}{\To}0.
$$
We decompose 
$$
	\mathcal{A}_n=(\mathcal{A}_n\cap K_1) ~\cup~ E_n ~\cup~ (\mathcal{A}_n\cap K_2\setminus (K_1\cup E_n))
$$ 
with
\[
\bald
	E_n:=\mathcal{A}_n
	\cap \bigcap_{j=1}^2 \Big\{\frac{d\absn{D u_{j,n}}}{d(\absn{D u_{1,n}}+\absn{D u_{2,n}}+\absn{Dv})}\neq 0\Big\}
	\subset \mathcal{A}_n \cap (K_1)_\frac{1}{n}\setminus (K_1)_\frac{1}{2n}. 
\eald
\]
Observe that
$$
\begin{aligned}
	&\int_{\mathcal{A}_n} \Big(df(x,Du_{n}+Dv)-df(x,Dv)
	-\sum_{j=1}^2 \left[df\big(x,Du_{j,n}+Dv\big)-df(x,Dv)\right]\Big)&\\
	&\qquad=\int_{E_n} \Big(df(x,Du_{n}+Dv)-df(x,Dv)
	-\sum_{j=1}^2 \left[df\big(x,Du_{j,n}+Dv\big)-df(x,Dv)\right]\Big),
\end{aligned}
$$
as the terms under the integral cancel out outside $E_n$. Moreover, 
$$
	\abs{Du_{2,n}}(E_n)\underset{n\to\infty}{\To}0,\quad \cL^N(E_n)\underset{n\to\infty}{\To}0 \quad\text{and}\quad
	\abs{Dv}(E_n)\underset{n\to\infty}{\To}0,
$$
the first since $E_n\subset (K_1)_\frac{1}{n}\setminus K_1$ and $Du_{2,n}$ does not charge $K_1$, and the latter two by the fact that $\bigcap_n E_n = \emptyset$. 
Due to \eqref{f1}, this implies that
$$
	\int_{E_n} df\big(x,Du_{2,n}+Dv\big)\underset{n\to\infty}{\To}0~~\text{and}~~\int_{E_n} df(x,Dv)\underset{n\to\infty}{\To}0.
$$
It remains to show that
\begin{align}\label{pfdecloc-10}
	\int_{E_n} df(x,Du_{n}+Dv)-\int_{E_n}df\big(x,Du_{1,n}+Dv\big)
	\underset{n\to\infty}{\To}0;
\end{align}

with $G:\cM(\Omega;\RR^{M\times N})\to\RR$ defined by
\be\label{defG}
	G(\lambda):=\int_\Omega df\Big(x,\lambda\Big),
\ee
our assertion \eqref{pfdecloc-10} can be written as follows:
$$
	G(\chi_{E_n}[D(u_{n}+v)])-G(\chi_{E_n}D(u_{1,n}+v))\underset{n\to\infty}{\To}0.
$$
Since $(Du_{2,n})$ does not charge $K_1$, 
\[
	\chi_{E_n}D(u_{n}+v)-\chi_{E_n}D(u_{1,n}+v)=\chi_{E_n} Du_{2,n} \to 0
\]
in total variation of measures. This concludes the proof, because $G$ is uniformly continuous on bounded subsets of $\cM(\Omega;\RR^{M\times N})$, due to Proposition~\ref{prop:measfunc-ucont} below.
\end{proof}
Regarded superficially, the following lemma is somewhat reminiscent of Reshetnyak's continuity theorem (Theorem 2.39 in \cite{AmFuPa00B}, for instance), but it uses norm topology instead of strict topology. It shows \emph{uniform} continuity of integral functionals on bounded sets of measures, for which there is no equivalent in terms of strict convergence. 
\begin{lem}\label{lem:measfunc-ucont}
Let $g:\overline\Omega\times \RR^{k}\to \RR$ be continuous on $\overline\Omega\times S^{k-1}$ and positively $1$-homogeneous in its second variable. Then the functional $G:\cM(\Omega;\RR^{k})\to\RR$,
$$
	G(\mu):=\int_\Omega g\Big(x,\frac{d\mu}{d\abs{\mu}}\Big)\,d\abs{\mu}
$$
is uniformly continuous on bounded subsets of $\cM(\Omega;\RR^{M\times N})$ with respect to the convergence of measures in total variation: For every pair of sequences $(\mu_n),(\lambda_n)\subset \cM(\Omega;\RR^{M\times N})$ such that
$\abs{\mu_n}(\Omega)$ and $\abs{\lambda_n}(\Omega)$ are bounded,
$$
	\abs{\mu_n-\lambda_n}\to 0~~\text{implies that}~~G(\mu_n)-G(\lambda_n)\to 0.
$$
\end{lem}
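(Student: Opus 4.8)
The plan is to reduce the claim to the uniform continuity of $g$ on the compact set $\overline\Omega\times S^{k-1}$, combined with a careful bookkeeping of the polar decompositions of $\mu_n$ and $\lambda_n$. First I would introduce the dominating measure $\sigma_n:=\abs{\mu_n}+\abs{\lambda_n}\in\cM(\Omega)$ (nonnegative) and write, by the Radon--Nikod\'ym theorem, $\mu_n=p_n\sigma_n$ and $\lambda_n=q_n\sigma_n$ with $p_n,q_n\in L^1(\Omega,\sigma_n;\RR^k)$. Using positive $1$-homogeneity of $g$ in its second variable, one has the key rewriting
\[
	G(\mu_n)=\int_\Omega g\Big(x,\frac{d\mu_n}{d\abs{\mu_n}}\Big)\,d\abs{\mu_n}
	=\int_\Omega g(x,p_n(x))\,d\sigma_n(x),
\]
and likewise $G(\lambda_n)=\int_\Omega g(x,q_n(x))\,d\sigma_n$. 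This is because $\frac{d\mu_n}{d\abs{\mu_n}}=p_n/\abs{p_n}$ and $\abs{\mu_n}=\abs{p_n}\sigma_n$ pointwise $\sigma_n$-a.e., so $g\big(x,\frac{d\mu_n}{d\abs{\mu_n}}\big)\abs{p_n}=g(x,p_n)$ by homogeneity (with the convention that the integrand is $0$ where $p_n=0$, which is consistent since $\sigma_n$-a.e.\ point where $p_n=0$ contributes nothing to $\abs{\mu_n}$). Thus
\[
	G(\mu_n)-G(\lambda_n)=\int_\Omega\big(g(x,p_n(x))-g(x,q_n(x))\big)\,d\sigma_n(x).
\]

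Next I would estimate the integrand. Since $g$ is continuous and $1$-homogeneous in the second variable, there is a constant $L\geq 0$ with $\abs{g(x,\xi)}\leq L\abs{\xi}$ on $\overline\Omega\times\RR^k$, and $g$ restricted to $\overline\Omega\times\overline{B_1}$ is uniformly continuous; let $\omega$ denote a modulus of continuity for it. Fix $\eps>0$ and split $\Omega$ into the set $A_n$ where $\abs{p_n-q_n}\leq \eps$ and its complement $B_n$. On $B_n$ we bound crudely $\abs{g(x,p_n)-g(x,q_n)}\leq L(\abs{p_n}+\abs{q_n})$, and observe that $\sigma_n(B_n)\leq \eps^{-1}\int_{B_n}\abs{p_n-q_n}\,d\sigma_n\leq\eps^{-1}\abs{\mu_n-\lambda_n}(\Omega)\to 0$; combined with the uniform $L^1(\sigma_n)$-bound on $\abs{p_n},\abs{q_n}$ (which is exactly the assumed boundedness of $\abs{\mu_n}(\Omega)$, $\abs{\lambda_n}(\Omega)$), absolute continuity of the integral gives $\int_{B_n}L(\abs{p_n}+\abs{q_n})\,d\sigma_n\to 0$. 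Wait --- one must be careful here, because $\sigma_n$ itself varies with $n$; I would instead argue directly: on $B_n$ use $\abs{p_n}+\abs{q_n}\le \abs{p_n-q_n}+2\min(\abs{p_n},\abs{q_n})$ and note $\abs{\min(\abs{p_n},\abs{q_n})}\leq \abs{p_n-q_n}$ when $p_n$ and $q_n$ point in sufficiently different directions is false in general; the clean route is to not partition by the size of $p_n-q_n$ but to use homogeneity more cleverly.

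Here is the robust version, which I would actually write. Decompose the difference using homogeneity: on the set where $p_n\neq 0$ and $q_n\neq 0$, write $p_n=\abs{p_n}\hat p_n$, $q_n=\abs{q_n}\hat q_n$ with $\hat p_n,\hat q_n\in S^{k-1}$, so
\[
	g(x,p_n)-g(x,q_n)=\abs{p_n}\big(g(x,\hat p_n)-g(x,\hat q_n)\big)+\big(\abs{p_n}-\abs{q_n}\big)g(x,\hat q_n),
\]
whence $\abs{g(x,p_n)-g(x,q_n)}\leq \abs{p_n}\,\omega\big(\abs{\hat p_n-\hat q_n}\big)+L\,\big\lvert\abs{p_n}-\abs{q_n}\big\rvert$. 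The elementary inequality $\abs{p_n}\,\abs{\hat p_n-\hat q_n}\leq 2\abs{p_n-q_n}$ (valid for nonzero vectors, and the remaining terms $g(x,0)=0$ handle the degenerate cases) lets me control the first term once $\abs{p_n-q_n}$ is small relative to $\abs{p_n}$; combined with $\big\lvert\abs{p_n}-\abs{q_n}\big\rvert\leq\abs{p_n-q_n}$ for the second. Then for fixed $\eps$, splitting $\Omega$ according to whether $\abs{\hat p_n-\hat q_n}\leq\eps$ (where $\omega(\abs{\hat p_n-\hat q_n})\le\omega(\eps)$, contributing $\le\omega(\eps)\abs{\mu_n}(\Omega)$) or $>\eps$ (where, by $2\abs{p_n-q_n}\ge\abs{p_n}\abs{\hat p_n-\hat q_n}>\eps\abs{p_n}$, we have $\abs{p_n}<2\eps^{-1}\abs{p_n-q_n}$, so this part of the integral is $\le (2\norm{g}_\infty\eps^{-1}+L)\abs{\mu_n-\lambda_n}(\Omega)\to 0$), and using $\abs{\mu_n}(\Omega)\le C$, we obtain $\limsup_n\abs{G(\mu_n)-G(\lambda_n)}\leq C\,\omega(\eps)$. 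Letting $\eps\to 0$ finishes the proof. The main obstacle --- and the reason for the care above --- is precisely that the reference measure $\sigma_n$ is $n$-dependent, so one cannot invoke a single Vitali/absolute-continuity argument; instead the estimate must be made pointwise in a way that produces only quantities of the form $\abs{\mu_n-\lambda_n}(\Omega)$ and $\abs{\mu_n}(\Omega)$, which are controlled by hypothesis.
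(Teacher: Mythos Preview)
Your argument is correct and starts exactly as the paper does: introduce the dominating measure $\sigma_n=\abs{\mu_n}+\abs{\lambda_n}$ and rewrite $G(\mu_n)-G(\lambda_n)=\int_\Omega\big(g(x,p_n)-g(x,q_n)\big)\,d\sigma_n$ with $p_n=\frac{d\mu_n}{d\sigma_n}$, $q_n=\frac{d\lambda_n}{d\sigma_n}$. Where you diverge from the paper is in abandoning your first splitting (by $\abs{p_n-q_n}\le\delta$ versus $>\delta$) too hastily. The observation you missed---and which the paper exploits---is that by construction $\abs{p_n}\le 1$ and $\abs{q_n}\le 1$ pointwise $\sigma_n$-a.e.; hence on the bad set $E_{n,\delta}=\{\abs{p_n-q_n}>\delta\}$ the integrand is bounded in $L^\infty$ by $2\norm{g}_{L^\infty(\overline\Omega\times\overline B_1(0))}$, and the Chebyshev bound $\sigma_n(E_{n,\delta})\le\delta^{-1}\abs{\mu_n-\lambda_n}(\Omega)\to 0$ disposes of that piece immediately. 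No absolute-continuity argument against a varying reference measure is needed at all---just an $L^\infty$ bound times a vanishing measure. On the good set one uses uniform continuity of $g$ on $\overline\Omega\times\overline B_1(0)$, contributing $\eps\,\sigma_n(\Omega)$.

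Your polar-decomposition detour through $\hat p_n,\hat q_n$ and the inequality $\abs{p_n}\,\abs{\hat p_n-\hat q_n}\le 2\abs{p_n-q_n}$ is sound and reaches the same conclusion, but it is more work than necessary; once you know the densities live in the closed unit ball, the first approach you wrote down (and then second-guessed) is the clean one.
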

\begin{proof}
Below, we abbreviate $\sigma_n:=\absn{\mu_n}+\absn{\lambda_n}$.
By the positive $1$-homogeneity of $g$ and the fact that both $\absn{\mu_n}$ and $\absn{\lambda_n}$ are absolutely continuous with respect to $\sigma_n$, we get that
$$
	G(\mu_n)-G(\lambda_n)=\int_\Omega g\Big(x,\frac{d\mu_n}{d\sigma_n}\Big)-g\Big(x,\frac{d\lambda_n}{d\sigma_n}\Big)\,d\sigma_n.
$$
Let $\eps>0$. If $\abs{\mu_n-\lambda_n}\to 0$, or, equivalently,
$$
	\int_\Omega \absB{\frac{d\mu_n}{d\sigma_n}-\frac{d\lambda_n}{d\sigma_n}}\,d\sigma_n\underset{n\to\infty}{\To} 0,
$$
then for each $\delta>0$, the set 
$$
	E_{n,\delta}:=\mysetr{x\in\Omega}{\absB{\frac{d\mu_n}{d\sigma_n}(x)-\frac{d\lambda_n}{d\sigma_n}(x)}\geq \delta}
$$
satisfies 
\be\label{lmuc-1}
	\sigma_n(E_{n,\delta})\to 0~~\text{as $n\to \infty$}.
\ee
Using that $g$ is uniformly continuous on $\overline{\Omega}\times S^{k-1}$, and thus also uniformly continuous
$\overline{\Omega}\times \overline{B}_1(0)$ by $1$-homogeneity in the second variable, we can choose a suitable $\delta=\delta(\eps)>0$ such that
\be\label{lmuc-2}
	\absB{g\Big(x,\frac{d\mu_n}{d\sigma_n}(x)\Big)-g\Big(x,\frac{d\lambda_n}{d\sigma_n}(x)\Big)}\leq \eps
	\quad\text{for every $x\in \Omega\setminus E_{n,\delta}$}.
\ee
By definition of $\sigma_n$, $\absB{\frac{d\mu_n}{d\sigma_n}(x)}\leq 1$ and $\absB{\frac{d\lambda_n}{d\sigma_n}(x)}\leq 1$ 
for $\sigma_n$-a.e.~$x$. Combining \eqref{lmuc-1} and \eqref{lmuc-2}, we infer that
$$
\bald
	\abs{G(\mu_n)-G(\lambda_n)}
	&\leq\int_\Omega \absB{g\Big(x,\frac{d\mu_n}{d\sigma_n}\Big)-g\Big(x,\frac{d\lambda_n}{d\sigma_n}\Big)}\,d\sigma_n\\
	&\leq \eps \sup_n \sigma_n(\Omega)+2 \norm{g}_{L^\infty(\overline{\Omega}\times \overline{B}_1(0))}\sigma_n(E_{n,\delta})\\
	&\underset{n\to\infty}{\To} \eps \sup_n \sigma_n(\Omega).
\eald
$$
Since $\eps$ was arbitrary and $\big(\sigma_n(\Omega)\big)$ are uniformly bounded, this concludes the proof.
\end{proof}
Lemma~\ref{lem:measfunc-ucont} also holds for more general integrands:
\begin{prop}\label{prop:measfunc-ucont}
Assume that $f$ satisfies \eqref{f0}--\eqref{f2}, and let $G:\cM(\Omega;\RR^{M\times N})\to\RR$ be defined by
$$
	G(\mu):=\int_\Omega df(x, \mu) = \int_\Omega f\Big(x,\frac{d\mu}{dx}\Big)dx+f^\infty\Big(x,\frac{d\mu}{d\abs{\mu}}(x)\Big)d\abs{\mu^s}(x).
$$
Then for every pair of sequences $(\mu_n),(\lambda_n)\subset \cM(\Omega;\RR^{M\times N})$ such that
$\abs{\mu_n}(\Omega)$ and $\abs{\lambda_n}(\Omega)$ are bounded,
$$
	\abs{\mu_n-\lambda_n}\to 0~~\text{implies that}~~G(\mu_n)-G(\lambda_n)\to 0.
$$
\end{prop}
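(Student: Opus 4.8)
The plan is to deduce the proposition from Lemma~\ref{lem:measfunc-ucont} via the ``perspective'' construction: one attaches the Lebesgue measure to $\mu$ as an extra scalar component and simultaneously glues $f$ and $f^\infty$ into a single positively $1$-homogeneous integrand on $\RR^{MN+1}$. Proposition~\ref{prop:finfty} is exactly what is needed to make this glued integrand continuous, hence admissible in Lemma~\ref{lem:measfunc-ucont}. (A direct imitation of the proof of Lemma~\ref{lem:measfunc-ucont}, splitting $G(\mu_n)-G(\lambda_n)$ into absolutely continuous and singular contributions, would also work in principle, but it is clumsier: on the absolutely continuous part one must replace $f$ by $f^\infty$ where the density is large, and $f$ itself is only continuous, not uniformly so.)

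First I would define $\hat g:\overline{\Omega}\times\RR^{MN+1}\to\RR$ (identifying $\RR^{M\times N}\times\RR\cong\RR^{MN+1}$) by
\[
	\hat g(x,\xi,t):=
	\begin{cases}
		\abs{t}\,f\big(x,\xi/\abs{t}\big), & t\neq 0,\\
		f^\infty(x,\xi), & t=0,\ \xi\neq 0,\\
		0, & (\xi,t)=(0,0),
	\end{cases}
\]
which is positively $1$-homogeneous in $(\xi,t)$ by construction. The key claim is that $\hat g$ is continuous on $\overline{\Omega}\times(\RR^{MN+1}\setminus\{0\})$, hence on the compact set $\overline{\Omega}\times S^{MN}$. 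Away from $\{t=0\}$ this is immediate from \eqref{f0}; along $(x_n,\xi_n,t_n)\to(x_0,\xi_0,0)$ with $\xi_0\neq0$ and $t_n\neq0$, the $1$-homogeneity of $f^\infty$ and \eqref{finfty-1} give
\begin{align*}
	\absb{\hat g(x_n,\xi_n,t_n)-f^\infty(x_n,\xi_n)}
	&=\abs{t_n}\,\absB{f\Big(x_n,\tfrac{\xi_n}{\abs{t_n}}\Big)-f^\infty\Big(x_n,\tfrac{\xi_n}{\abs{t_n}}\Big)}\\
	&\leq\mu\Big(\tfrac{\abs{\xi_n}}{\abs{t_n}}\Big)\big(\abs{t_n}+\abs{\xi_n}\big)\To0,
\end{align*}
since $\abs{\xi_n}/\abs{t_n}\to\infty$ while $\abs{t_n}+\abs{\xi_n}$ stays bounded; as $f^\infty(x_n,\xi_n)\to f^\infty(x_0,\xi_0)$ by continuity of $f^\infty$ on $\overline{\Omega}\times(\RR^{M\times N}\setminus\{0\})$, this yields $\hat g(x_n,\xi_n,t_n)\to\hat g(x_0,\xi_0,0)$.

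Next I would lift measures: for $\mu\in\cM(\Omega;\RR^{M\times N})$ set $\tilde\mu:=(\mu,\cL^N)\in\cM(\Omega;\RR^{MN+1})$, with $\cL^N$ restricted to the bounded set $\Omega$. Writing $\mu=\tfrac{d\mu}{dx}\cL^N+\mu^s$, the absolutely continuous and singular parts of $\tilde\mu$ are $\big(\tfrac{d\mu}{dx},1\big)\cL^N$ and $(\mu^s,0)$, so $\abs{\tilde\mu}=\sqrt{\abs{\tfrac{d\mu}{dx}}^2+1}\,\cL^N+\abs{\mu^s}$. Working on the two mutually singular parts separately and exploiting the $1$-homogeneity of $\hat g$ (together with $\tfrac{d\mu}{d\abs{\mu}}=\tfrac{d\mu^s}{d\abs{\mu^s}}$ $\abs{\mu^s}$-a.e.), I would verify
\begin{align*}
	\int_\Omega \hat g\Big(x,\tfrac{d\tilde\mu}{d\abs{\tilde\mu}}\Big)d\abs{\tilde\mu}
	&=\int_\Omega \hat g\Big(x,\tfrac{d\mu}{dx},1\Big)dx+\int_\Omega \hat g\Big(x,\tfrac{d\mu}{d\abs{\mu}},0\Big)d\abs{\mu^s}\\
	&=\int_\Omega f\Big(x,\tfrac{d\mu}{dx}\Big)dx+\int_\Omega f^\infty\Big(x,\tfrac{d\mu}{d\abs{\mu}}\Big)d\abs{\mu^s},
\end{align*}
that is $G(\mu)=\hat G(\tilde\mu)$, where $\hat G$ is the functional of Lemma~\ref{lem:measfunc-ucont} associated with the integrand $\hat g$ (with $k=MN+1$).

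Finally, given sequences with $\abs{\mu_n}(\Omega),\abs{\lambda_n}(\Omega)$ bounded and $\abs{\mu_n-\lambda_n}\to0$, the lifts satisfy $\tilde\mu_n-\tilde\lambda_n=(\mu_n-\lambda_n,0)$, so $\abs{\tilde\mu_n-\tilde\lambda_n}=\abs{\mu_n-\lambda_n}\to0$, while $\abs{\tilde\mu_n}(\Omega)\leq\abs{\mu_n}(\Omega)+\cL^N(\Omega)$ (using $\sqrt{a^2+1}\leq a+1$) and likewise for $\lambda_n$, so the lifted sequences are bounded in $\cM(\Omega;\RR^{MN+1})$. Lemma~\ref{lem:measfunc-ucont} applied to $\hat g$ then yields $\hat G(\tilde\mu_n)-\hat G(\tilde\lambda_n)\to0$, i.e.\ $G(\mu_n)-G(\lambda_n)\to0$, which is the claim. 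I expect the only genuinely delicate point to be the continuity of $\hat g$ across $\{t=0\}$ --- precisely what the uniform estimate of Proposition~\ref{prop:finfty} provides --- together with the bookkeeping in the identity $G(\mu)=\hat G(\tilde\mu)$, especially the correct matching of the absolutely continuous and singular parts under the polar decomposition; everything else is routine.
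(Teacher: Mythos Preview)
Your proof is correct and uses a genuinely different device from the paper's. The paper splits
\[
	G(\mu)=\int_\Omega (f-f^\infty)\Big(x,\frac{d\mu}{dx}\Big)dx+\int_\Omega f^\infty\Big(x,\frac{d\mu}{d\abs{\mu}}\Big)d\abs{\mu},
\]
applies Lemma~\ref{lem:measfunc-ucont} directly to the second (already $1$-homogeneous) term, and treats the first term $H(\mu)$ by a cutoff argument: Proposition~\ref{prop:finfty} lets one truncate $f-f^\infty$ to a compactly supported, hence uniformly continuous, integrand up to an $\eps$-error in total variation, and then $\normn{\frac{d\mu_n}{dx}-\frac{d\lambda_n}{dx}}_{L^1}\leq\abs{\mu_n-\lambda_n}(\Omega)\to 0$ plus dominated convergence closes the estimate. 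Your route instead absorbs the non-homogeneity of $f$ by the perspective lift $\tilde\mu=(\mu,\cL^N)$ and a single $1$-homogeneous integrand $\hat g$ on $\RR^{MN+1}$, so that Lemma~\ref{lem:measfunc-ucont} can be invoked once; Proposition~\ref{prop:finfty} enters only to secure continuity of $\hat g$ across $\{t=0\}$. Your approach is conceptually cleaner and aligns with the standard ``area integrand'' viewpoint in the $\mathrm{BV}$ literature (cf.\ the $\langle\cdot\rangle$-strict topology used elsewhere in the paper), while the paper's splitting is more hands-on and makes explicit that only the absolutely continuous densities interact with the non-homogeneous part of $f$. Both arguments rely on the same two ingredients, just organized differently.
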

\begin{proof}
Let us rewrite $G(\mu)$ as
$$
	G(\mu)=\int_\Omega (f-f^\infty)\Big(x,\frac{d\mu}{dx}\Big)dx+f^\infty\Big(x,\frac{d\mu}{d\abs{\mu}}(x)\Big)d\abs{\mu}(x).
$$
Since the second term already satisfies the claim by Lemma~\ref{lem:measfunc-ucont}, it suffices to show that the assertion also holds for
$$
	H(\mu):=\int_\Omega (f-f^\infty)\Big(x,\frac{d\mu}{dx}\Big)dx.
$$
To see this, fix some $\eps>0$. By Proposition~\ref{prop:finfty}, we can choose a ball $B_R(0) \in \RR^{M\times N}$ with a suitable radius $R=R(\eps)$ such that
\[
	\abs{f(x,\xi)-f^\infty(x,\xi)}\leq \eps(\abs{\xi}+1)~~\text{for all $(x,\xi)\in \overline{\Omega}\times \RR^{M\times N} \setminus B_R(0)$}.
\]
Further, we find a cut-off function $\varphi_\eps \in C_c^\infty(\RR^{M\times N};[0,1])$ with $\varphi_\eps(\xi)=0$ on $B_R(0)$
and set $h_\eps(x,\xi):=[f(x,\xi)-f^\infty(x,\xi)](1-\varphi_\eps(\xi))$. By writing that $[f(x,\xi)-f^\infty(x,\xi)] = h_\eps(x,\xi) + [f(x,\xi)-f^\infty(x,\xi)]\varphi_\eps(\xi))$, we get that
\[
	\abs{H(\mu_n)-H(\lambda_n)}
	\leq 
	\int_\Omega \absB{h_\eps\Big(x,\frac{d\mu_n}{dx}\Big)-h_\eps\Big(x,\frac{d\lambda_n}{dx}\Big)}dx
	+\eps \big(\sup_n\absn{\mu_n}(\Omega)+\sup_n\absn{\lambda_n}(\Omega)+2\big). 
\]
Since $h_\eps$ is supported on a compact subset of $\overline{\Omega}\times \RR^{M\times N}$ and thus uniformly continuous,
and $\norm{\frac{d\mu_n}{dx}-\frac{d\lambda_n}{dx}}_{L^1(\Omega)}\leq \abs{\mu_n-\lambda_n}(\Omega)\to 0$,
dominated convergence yields that 
\[
	\limsup_{n\to \infty} \abs{H(\mu_n)-H(\lambda_n)}
	\leq \eps C,
\]
for arbitrary $\eps>0$.
\end{proof}

\section{A characterization of weak lower semicontinuity}\label{sec:necsuff}
%
Within this section, we prove Theorem~\ref{thm:BVwlsc}. Our starting point is the following result of \cite{KriRi10b} 
on weak$^*$ lower semicontinuity along sequences with fixed boundary values:
\begin{theorem}[adapted from {\cite[Theorem 2]{KriRi10b}}]
\label{WLSC-Dir}
Suppose that $\Omega$ is a bounded Lipschitz domain and that \eqref{f0}, 
\eqref{f1}, \eqref{f2} hold true.
Suppose further that $\{u_k\}_{k\in \mathbb{N}}$ is a bounded sequence 
in $\mathrm{BV}(\Omega; \mathbb{R}^M)$ such that
$u_k \wstar u$ in $\mathrm{BV}(\Omega; \mathbb{R}^M)$ and $u_k=u$ on 
$\partial\Omega$ in the sense of trace.
If $f(x_0, \cdot)$ is quasiconvex for a.e.~$x_0 \in \overline{\Omega}$, then
$$
F(u) \leq \liminf_{k\to \infty} F(u_k),
$$
for the functional $F$ introduced in \eqref{defF}.
\end{theorem}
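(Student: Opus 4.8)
The plan is to derive the statement from \cite[Theorem~2]{KriRi10b}, which computes the relaxation (equivalently, the weak$^*$ lower semicontinuous envelope) of functionals of the form \eqref{defF} along weak$^*$ convergent sequences in $\mathrm{BV}$ that respect prescribed Dirichlet data. The first step is a translation of hypotheses: \eqref{f0}--\eqref{f1} give continuity and linear growth; \eqref{f2} is precisely the strong (regular) recession function used there, the limit being taken jointly in $y\to x_0$, $t\to+\infty$ and $\eta\to\xi$, and Proposition~\ref{prop:finfty} records the uniform control of $f-f^\infty$ at infinity that enters their argument; quasiconvexity of $f(x_0,\cdot)$ for a.e.~$x_0\in\Omega$ is our standing assumption, and since $\partial\Omega$ is Lebesgue null this is the same as for a.e.~$x_0\in\overline\Omega$. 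One also checks that $df(x,Du)$ coincides with the integrand considered in \cite{KriRi10b} and that $F$ is the extension to $\mathrm{BV}$ of $\tilde F$ which is continuous for area-strict convergence (by \cite[Theorem~5]{KriRi10a}, or \cite[Theorem~1]{RiSha13a}); this last point is what lets one pass from a relaxation identity for the $W^{1,1}$-functional to a lower semicontinuity statement for $F$ on all of $\mathrm{BV}$.

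Granting this, I would argue as follows. Since $u_k=u$ on $\partial\Omega$ in the sense of trace, the boundary contribution of the relaxed Dirichlet functional of \cite{KriRi10b}, which is of the form $\int_{\partial\Omega}f^\infty\big(x,(u|_{\partial\Omega}-u|_{\partial\Omega})\otimes\nu\big)\,d\cH^{N-1}$, vanishes identically --- the integrand is evaluated at the zero matrix, where $f^\infty=0$ by $1$-homogeneity --- so that the relaxed functional with data $u|_{\partial\Omega}$ equals $F$ at $u$. A short diagonal argument --- approximating each $u_k$ area-strictly by $W^{1,1}$ maps with the same trace $u_k|_{\partial\Omega}$ and invoking the area-strict continuity of $F$ --- reduces a general $\mathrm{BV}$ sequence to one in $W^{1,1}$, after which \cite[Theorem~2]{KriRi10b} gives $F(u)\le\liminf_{k\to\infty}F(u_k)$. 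Should one wish to avoid the Dirichlet form of that result and use its unconstrained version instead, one can glue: fix an open ball $B$ with $\overline\Omega\subset B$, extend $u$ to $\tilde u\in\mathrm{BV}(B;\RR^M)$ so that the trace of $\tilde u$ on $\partial\Omega$ taken from $B\setminus\overline\Omega$ equals the trace of $u$ taken from $\Omega$ (possible since a Lipschitz domain is a $\mathrm{BV}$-extension domain), continue $f$ to a continuous integrand on $\overline B\times\RR^{M\times N}$ that still satisfies \eqref{f0}--\eqref{f2} with $f(x,\cdot)$ quasiconvex for a.e.~$x$, and set $\tilde u_k:=u_k\chi_\Omega+\tilde u\chi_{B\setminus\Omega}$. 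Then $u_k=u$ on $\partial\Omega$ forces the gluing to create no jump along $\partial\Omega$, hence $\tilde u_k\wstar\tilde u$ in $\mathrm{BV}(B;\RR^M)$ with $\tilde u_k=\tilde u$ near $\partial B$ and $\int_B df(x,D\tilde u_k)=F(u_k)+c$, where $c:=\int_{B\setminus\overline\Omega}df(x,D\tilde u)$ is independent of $k$; lower semicontinuity on $B$ (with fixed trace on $\partial B$) together with subtraction of $c$ then gives the claim.

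I expect the main obstacle to be bibliographic and technical rather than conceptual: locating the precise form of \cite[Theorem~2]{KriRi10b} and matching hypotheses, especially around the recession function and the use of density of $W^{1,1}$ in $\mathrm{BV}$ for area-strict convergence. In the gluing variant, the one point needing real care is that $f$ can be continued past $\overline\Omega$ keeping quasiconvexity for a.e.~$x$; this is most comfortably sidestepped by applying \cite[Theorem~2]{KriRi10b} directly on the original Lipschitz domain $\Omega$ (no extension), which is the route I would actually follow.
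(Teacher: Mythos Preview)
Your proposal is correct and follows essentially the same route as the paper: the theorem is not proved from scratch but quoted from \cite[Theorem~2]{KriRi10b}, and the only thing to observe is that the boundary jump term appearing in that reference is irrelevant here because $u_k=u$ on $\partial\Omega$ in the sense of trace. The paper phrases this as a cancellation (the identical boundary contributions for $u_k$ and $u$ subtract out), whereas you phrase it as a vanishing (the jump $(u|_{\partial\Omega}-u|_{\partial\Omega})\otimes\nu$ is zero and $f^\infty(x,0)=0$); these are two readings of the same mechanism, and your additional remarks on area-strict approximation and the gluing alternative are sound but more than the paper actually spells out.
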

\begin{rem}
Our functional $F$ does not include the boundary jump term that appears 
in the functional $\mathcal{F}$ in \cite[Theorem 2]{KriRi10b}. However, 
since we assumed that $u_k=u$ on $\partial\Omega$, these terms for $u$ and $u_k$ cancel:
$$
     f^\infty\Big(x,\frac{u_k}{\abs{u_k}}\otimes 
\nu_\Omega\Big)\abs{u_k}=f^\infty\Big(x,\frac{u}{\abs{u}}\otimes 
\nu_\Omega\Big)\abs{u}\quad\text{on $\cH^{N-1}$-a.e.~$x\in\partial\Omega$},
$$
where $\nu_\Omega=\nu_\Omega(x):=-\nu_x$ denotes the inner normal to 
$\partial\Omega$ at $x\in \partial\Omega$.
\end{rem}
At this stage, it is not perfectly clear if Theorem~\ref{WLSC-Dir} stays 
true if $\partial\Omega$ is not Lipschitz. However,
in our proof of Theorem~\ref{thm:BVwlsc}, it suffices to have a lower 
semicontinuity
result along sequences $(u_k)$ whose derivative does not charge 
$\partial\Omega$ in the sense of Definition~\ref{def:charge}, besides 
having the same trace as the limit $u$. For such sequences, we can avoid 
assuming any kind of regularity of $\partial\Omega$:
\begin{cor}
\label{cor:WLSC-Dir}
If $u_k=u$ in a neighborhood of $\partial\Omega$ (possibly depending on $k$) and $(Du_k)_{k\in\NN}$ does not charge $\partial \Omega$, then Theorem~\ref{WLSC-Dir} 
holds even if $\Omega$ is an arbitrary bounded domain with possibly 
irregular boundary.
\end{cor}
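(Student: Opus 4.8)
The plan is to reduce the corollary to Theorem \ref{WLSC-Dir} by replacing the (possibly irregular) domain $\Omega$ with a larger, smooth (say Lipschitz or $C^1$) domain $\Omega'\supset\overline\Omega$, extending all the relevant functions by the common limit $u$, and checking that neither the functional values nor the weak$^*$ convergence are affected. First I would fix an open bounded $\Omega'$ with Lipschitz boundary such that $\overline\Omega\subset\Omega'$; such a $\Omega'$ always exists (e.g.\ a large ball). Since, by hypothesis, $u_k=u$ in some neighborhood $U_k$ of $\partial\Omega$ in $\overline\Omega$, one may extend $u_k$ and $u$ to $\Omega'$ by defining $\tilde u:=u$ on $\Omega'\setminus\Omega$ and, correspondingly, $\tilde u_k:=u_k$ on $\Omega$ and $\tilde u_k:=u$ on $\Omega'\setminus\Omega$. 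Because $u_k$ and $u$ agree near $\partial\Omega$, the extension $\tilde u_k$ is well-defined and lies in $\mathrm{BV}(\Omega';\RR^M)$, with $D\tilde u_k = Du_k\restriction\Omega + Du\restriction(\Omega'\setminus\overline\Omega)$ and \emph{no} extra singular part concentrated on $\partial\Omega$; here the assumption that $(Du_k)$ does not charge $\partial\Omega$ (Definition \ref{def:charge}) is exactly what guarantees that no mass of $Du_k$ sits on the interface, so that the naive gluing of the derivatives is correct and there is no jump contribution along $\partial\Omega$. (To be completely safe one can also first redefine $u$ itself, if necessary, to be extended by its trace; but since the statement is phrased relative to the limit $u$, one only needs the \emph{differences} $u_k-u$ to be compactly supported in $\Omega$, which they are.)

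Next I would verify the two structural facts needed to apply Theorem \ref{WLSC-Dir} on $\Omega'$: (1) $\tilde u_k\wstar\tilde u$ in $\mathrm{BV}(\Omega';\RR^M)$, and (2) $\tilde u_k=\tilde u$ on $\partial\Omega'$ in the sense of trace. For (1): strong $L^1$ convergence on $\Omega'$ is immediate since $\tilde u_k-\tilde u=u_k-u$ is supported in $\Omega$ and $u_k\to u$ in $L^1(\Omega)$; for the weak$^*$ convergence of the measures, test against $\phi\in C_0(\Omega';\RR^{M\times N})$ and split the integral into the part over $\Omega$, where $D\tilde u_k\restriction\Omega=Du_k\wstar Du=D\tilde u\restriction\Omega$ (using that $|Du_k|$ is uniformly bounded, hence tight away from $\partial\Omega$ by the no-charge hypothesis, so there is no loss of mass across $\partial\Omega$ in the limit), and the part over $\Omega'\setminus\overline\Omega$, where $D\tilde u_k$ is the fixed measure $Du\restriction(\Omega'\setminus\overline\Omega)$. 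Boundedness of $(\tilde u_k)$ in $\mathrm{BV}(\Omega')$ follows from $|D\tilde u_k|(\Omega')\le |Du_k|(\Omega)+|Du|(\Omega'\setminus\overline\Omega)$. Property (2) holds trivially because $\tilde u_k$ and $\tilde u$ both equal $u$ on an open set containing $\partial\Omega'$, hence have identical traces there.

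Then I would extend the integrand to $\overline{\Omega'}$ — continuously, preserving \eqref{f0}--\eqref{f2}; e.g.\ by Tietze-type extension of $f$ (and of $f^\infty$) off $\overline\Omega$, or simply by choosing $f(x,\xi):=f^\infty(x,\xi)$ for $x\notin\overline\Omega$ adjusted to match continuously, any choice works since the behavior on $\Omega'\setminus\overline\Omega$ will cancel. Denote the extended functional on $\Omega'$ by $F'$. By construction,
$$
F'(\tilde u_k)=F(u_k)+\int_{\Omega'\setminus\overline\Omega}\d f(x,Du),\qquad
F'(\tilde u)=F(u)+\int_{\Omega'\setminus\overline\Omega}\d f(x,Du),
$$
the added constant being finite by \eqref{f1} and the same in both lines; there is no boundary-of-$\Omega$ term because $Du_k$ does not charge $\partial\Omega$ and $\tilde u_k=\tilde u$ there. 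Applying Theorem \ref{WLSC-Dir} on the Lipschitz domain $\Omega'$ to the sequence $\tilde u_k$ (whose integrand is quasiconvex in the second variable a.e.\ by hypothesis on $f$, and the extension can be chosen quasiconvex as well, or one notes that only a.e.\ $x\in\Omega$ matters since the complement contributes a constant) gives $F'(\tilde u)\le\liminf_k F'(\tilde u_k)$; subtracting the common constant yields $F(u)\le\liminf_k F(u_k)$, as claimed. The main obstacle, and the only place real care is needed, is the weak$^*$ convergence $D\tilde u_k\wstar D\tilde u$ on $\Omega'$: one must rule out mass of $Du_k$ escaping to $\partial\Omega$ in the limit, and this is precisely guaranteed by the no-charge assumption via $\sup_n|Du_n|((\partial\Omega)_\delta\cap\Omega)\to 0$ as $\delta\to0^+$, which makes the gluing of the limiting measures across the interface legitimate.
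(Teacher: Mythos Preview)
Your argument has a genuine gap at the very first step: for an arbitrary bounded open set $\Omega$ there is no reason why $u\in\mathrm{BV}(\Omega;\RR^M)$ should admit an extension $\tilde u\in\mathrm{BV}(\Omega';\RR^M)$. The sentence ``define $\tilde u:=u$ on $\Omega'\setminus\Omega$'' is circular ($u$ is only defined on $\Omega$), and ``extended by its trace'' is unavailable precisely because irregular $\partial\Omega$ may support neither a trace operator nor a BV extension operator. Your parenthetical observation that only the differences $v_k:=u_k-u$ need to be extended (by zero, being compactly supported) is correct and is in fact how the paper begins, but it does not close the argument: applying Theorem~\ref{WLSC-Dir} on $\Omega'$ to $v_k\wstar 0$ produces a statement about $F'(v_k)$ versus $F'(0)$, not about $F(u+v_k)$ versus $F(u)$, and you cannot absorb $Du$ into the integrand since $\nabla u$ is merely $L^1$. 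A secondary, more technical problem is that a continuous extension of $f$ to $\overline{\Omega'}$ preserving \eqref{f0}--\eqref{f2} \emph{and} quasiconvexity in the second variable is not as automatic as you suggest.

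The paper avoids both difficulties by cutting off the \emph{integrand} rather than extending the function: one multiplies $f$ by $\varphi_\eps\in C_c^\infty(\RR^N;[0,1])$ vanishing on an $\eps$-neighborhood of $\partial\Omega$. The product $\varphi_\eps f$ then extends by zero to $\overline{\Omega'}$ continuously, and remains quasiconvex (nonnegative scalar multiple). Because the modified integrand ignores a collar of $\partial\Omega$, one may replace $u$ there by $u_\eps:=\chi_{\Omega_\eps}u$ for a smooth $\Omega_\eps\subset\subset\Omega$; this \emph{does} lie in $\mathrm{BV}(\Omega';\RR^M)$ since $\partial\Omega_\eps$ is regular. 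Theorem~\ref{WLSC-Dir} is applied on $\Omega'$ to $v_k+u_\eps\wstar u_\eps$, and one lets $\eps\to 0$. The no-charge hypothesis enters not in any gluing of measures across $\partial\Omega$, but to guarantee that the error $\int_\Omega df_\eps(x,Dv_k+Du)-\int_\Omega df(x,Dv_k+Du)$ vanishes \emph{uniformly in $k$} as $\eps\to 0$.
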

\begin{proof}
Let $\eps>0$. We will extend a suitable modification of $F$, denoted $F_\eps$, to 
a larger domain $\Omega'\supset \Omega$ with Lipschitz boundary, and 
then apply Theorem~\ref{WLSC-Dir} to $F_\eps$ on $\Omega'$.

Choose a cut-off function $\varphi_\eps\in C_c^\infty(\RR^N;[0,1])$ such 
that $\varphi_\eps(x)=0$ for every 
$x\in\Omega$ with $\dist{x}{\RR^N\setminus \Omega}\leq \eps$ for every and $\varphi_\eps(x)=1$ for every $x \in \Omega$ that satisfies
$\dist{x}{\RR^N\setminus \Omega}\geq 2\eps$ and $\varphi_\eps(x)=0$
For $v\in \mathrm{BV}(\Omega';\RR^M)$, we define
\[
     df_\eps(x,Dv):=\varphi_\eps(x) df(x,Dv).
\]
Notice that $df_\eps$ vanishes in a whole neighborhood of 
$\partial\Omega$. We therefore can modify its second argument freely in 
this region; in particular, we can extend $u$ to a function $u_\eps$ by setting 
\[
     u_\eps:=\chi_{\Omega_\eps}u \in 
\mathrm{BV}(\Omega';\RR^M),\quad\text{whence}\quad 
df_\eps(\cdot,Du_\eps)=\chi_\Omega df_\eps(\cdot,Du)~~\text{on $\Omega'$,}
\]
where $\Omega_\eps\subset 
\Omega$ is chosen in such a way that it has a smooth boundary (say, of class $C^1$) and $\varphi_\eps=0$ on 
$\Omega\setminus\bar{\Omega}_\eps$.

In addition, for each $k$, the function 
\[
	v_k:=u_k-u
\]
vanishes near $\partial\Omega$ and thus can be extended by zero to a
function in $\mathrm{BV}(\Omega';\RR^M)$. Theorem~\ref{WLSC-Dir}, applied on the 
Lipschitz domain $\Omega'$, now gives that
\begin{equation}\label{corwlsci-0}
     \liminf_{k\to\infty} \int_{\Omega'} df_\eps(x,Dv_k+Du_\eps)\geq 
\int_{\Omega'} df_\eps(x,Du_\eps).
\end{equation}
Moreover,
\begin{equation}\label{corwlsci-1}
     \int_{\Omega'} df_\eps(x,Du_\eps)=\int_{\Omega} 
df_\eps(x,Du)\underset{\eps\to 0^+}{\To} \int_{\Omega} df(x,Du)
\end{equation}
by dominated convergence, and since
$(Dv_k+Du)$ does not charge $\partial\Omega$, in view of \eqref{f1}, we 
also have that
\[
\begin{aligned}
     &\sup_k \abs{\int_{\Omega} df_\eps(x,Dv_k+Du)-\int_{\Omega} 
df(x,Dv_k+Du)} \\
     &\qquad\leq C \sup_k\abs{Dv_k+Du}(\Omega\cap 
\{\varphi_\eps<1\})+C\cL^N(\Omega\cap \{\varphi_\eps<1\})
     \underset{\eps\to 0^+}{\To}0,
\end{aligned}
\]
whence
\begin{equation}\label{corwlsci-2}
     \int_{\Omega'} df_\eps(x,Dv_k+Du_\eps)=\int_{\Omega} 
df_\eps(x,Dv_k+Du)\underset{\eps\to 0^+}{\To} \int_{\Omega} 
df(x,Dv_k+Du)\quad\text{uniformly in $k$}.
\end{equation}
Combined, \eqref{corwlsci-0}--\eqref{corwlsci-2} yield that $F(u) \leq 
\liminf_{k\to \infty} F(v_k+u)=\liminf_{k\to \infty} F(u_k)$, as asserted.
\end{proof}

%
%

Next, we study lower semicontinuity along pure concentrations at the boundary. For such sequences, 
it is always possible to add or remove a non-zero weak$^*$ limit:
\begin{prop}\label{prop:asympaddconc}
Assume that \eqref{f0}--\eqref{f2} hold, and let $(u_n)_{n \in \mathbb{N}}\subset \mathrm{BV}(\Omega;\RR^M)$ be a bounded sequence that is purely concentrating on the boundary, i.e., 
$S_n:=\{u_n\neq 0\}\cup \supp \abs{D u_n} \subset (\partial \Omega)_{r_n}$ with a decreasing sequence $r_n\searrow 0$.
Then for every $u\in \mathrm{BV}(\Omega;\RR^M)$,
\[
	F(u+u_n)-F(u)-F(u_n)+F(0)\underset {n\to\infty}{\To} 0.
\]
Here, $(\partial \Omega)_{r_n}$ denotes the open $r_n$-neighborhood of $\partial \Omega$ as before.
\end{prop}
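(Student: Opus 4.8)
The plan is to localize everything to a shrinking boundary layer and then to invoke the uniform continuity of the measure functional $G(\mu):=\int_\Omega df(x,\mu)$ on bounded subsets of $\cM(\Omega;\RR^{M\times N})$ with respect to the total variation norm, which is exactly Proposition~\ref{prop:measfunc-ucont}. Concretely, I would set $A_n:=\Omega\cap(\partial\Omega)_{r_n}$ and $B_n:=\Omega\setminus A_n$. Since $r_n\searrow 0$, the sets $A_n$ decrease to $\bigcap_n A_n=\Omega\cap\partial\Omega=\emptyset$, hence $\cL^N(A_n)\to0$ and $\abs{Du}(A_n)\to0$ by continuity from above of the finite measures $\cL^N$ and $\abs{Du}$ on $\Omega$. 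Because $S_n\subset A_n$, on $B_n$ we have $\nabla u_n=0$ a.e.\ and $\abs{D^su_n}(B_n)=0$, so the restriction of $D(u+u_n)$ to $B_n$ coincides with that of $Du$ (same absolutely continuous density a.e., same singular part and polar vector). Splitting each of $F(u+u_n)$, $F(u)$, $F(u_n)$, $F(0)$ into a $B_n$-part and an $A_n$-part, the $B_n$-parts cancel pairwise, leaving
\[
F(u+u_n)-F(u)-F(u_n)+F(0)
=\Big[\int_{A_n}df(x,D(u+u_n))-\int_{A_n}df(x,Du_n)\Big]
-\int_{A_n}df(x,Du)+\int_{A_n}f(x,0)\,dx .
\]

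The last two terms tend to zero straight from the linear growth bound \eqref{f1}: indeed $\abs{\int_{A_n}df(x,Du)}\le C\big(\abs{Du}(A_n)+\cL^N(A_n)\big)$ and $\abs{\int_{A_n}f(x,0)\,dx}\le C\,\cL^N(A_n)$, both of which vanish by the first step. The bracketed term is the crux, and the step I expect to be the main obstacle: here $\abs{Du_n}(A_n)=\abs{Du_n}(\Omega)$ is merely bounded, never small, so no elementary growth estimate suffices. The idea is to recognize the bracket as $G(\chi_{A_n}D(u+u_n))-G(Du_n)$. Indeed $\chi_{A_n}D(u+u_n)$ vanishes on $B_n$ (both its density and its singular part), so that $G(\chi_{A_n}D(u+u_n))=\int_{B_n}f(x,0)\,dx+\int_{A_n}df(x,D(u+u_n))$, whereas $G(Du_n)=\int_{B_n}f(x,0)\,dx+\int_{A_n}df(x,Du_n)$ because $Du_n$ is carried by $S_n\subset A_n$; the two copies of $\int_{B_n}f(x,0)\,dx$ cancel. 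Since $\chi_{A_n}D(u+u_n)-Du_n=\chi_{A_n}Du$ (using $\chi_{A_n}Du_n=Du_n$), the measures $\chi_{A_n}D(u+u_n)$ and $Du_n$ are bounded in $\cM(\Omega;\RR^{M\times N})$ uniformly in $n$ (here I use that $(u_n)$ is bounded in $\mathrm{BV}$) and their difference has total variation $\abs{Du}(A_n)\to0$. Proposition~\ref{prop:measfunc-ucont} then gives $G(\chi_{A_n}D(u+u_n))-G(Du_n)\to0$, and combining this with the two easy terms completes the proof.

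The only genuinely nontrivial input is Proposition~\ref{prop:measfunc-ucont}; the remainder is bookkeeping. The one point I would double-check carefully is the behaviour of the Radon--Nikod\'ym and polar decompositions under restriction to $A_n$ (together with the fact that $Du_n$ is \emph{carried} by $A_n$, not merely that $u_n$ vanishes off $A_n$), since this is precisely what legitimizes both the pairwise cancellation of the $B_n$-parts and the identification of the bracket with a difference of values of $G$. Beyond that I anticipate no difficulty, and, as the argument uses no regularity of $\partial\Omega$ whatsoever, the statement holds for an arbitrary bounded open $\Omega$.
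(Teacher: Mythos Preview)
Your argument is correct and follows essentially the same route as the paper: localize to the shrinking boundary layer where the four terms do not automatically cancel, and then invoke Proposition~\ref{prop:measfunc-ucont} together with $\abs{Du}(A_n)\to 0$ to handle the remaining difference. The paper is slightly more streamlined in that it works directly with $S_n$ rather than $A_n$, treats both surviving pairs via Proposition~\ref{prop:measfunc-ucont} (rather than one via the growth bound), and in fact establishes the stronger convergence $df(\cdot,Du+Du_n)-df(\cdot,Du)-df(\cdot,Du_n)+df(\cdot,0)\to 0$ in total variation of measures; but the substance is the same.
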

\begin{proof}
We prove a stronger result, namely that
\[
	f(\cdot,Du+Du_n)-f(\cdot,Du)-f(\cdot,Du_n)+f(\cdot,0)\underset {n\to\infty}{\To} 0
\]
as measures. The four terms on the left hand side cancel outside the set $S_n$ where $Du_n=0$. Therefore, it suffices to show that
\[
	f(\cdot,Du_n+\chi_{S_n}Du)-f(\cdot,Du_n)\to 0\quad\text{and}\quad
	f(\cdot,\chi_{S_n}Du)-f(\cdot,0)\to 0.
\]
This is a consequence of Proposition~\ref{prop:measfunc-ucont}, because $\abs{D u}(S_n)\leq \abs{D u}(\Omega\cap (\partial \Omega)_{r_n}) \to \abs{D u}(\emptyset)= 0$ by monotone convergence.
\end{proof}
As already mentioned, a sufficient condition for lower semicontinuity along pure concentrations at the boundary is boundedness of the functional (not necessarily the integrand) from below:
\begin{prop}\label{prop:lscbndconc}
Assume that \eqref{f0}--\eqref{f2} hold, let $\cU\subset \mathrm{BV}(\Omega;\RR^N)$ be an additively closed set,
and suppose that $F:\cU\to \RR$ is bounded from below. Then 
$F$ is lower semicontinuous along all sequences $(u_n)_{n \in \mathbb{N}}\subset \cU$ that are bounded in $\mathrm{BV}(\Omega;\RR^N)$
and satisfy 
$S_n\subset (\partial \Omega)_{r_n}$ with a decreasing sequence $r_n\searrow 0$, where
$S_n:=\{u_n\neq 0\}\cup \supp \abs{D u_n}$.  
\end{prop}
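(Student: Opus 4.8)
The first step is to identify the common limit of all the admissible sequences: if $(u_n)$ is as in the statement, then $u_n\wstar 0$ in $\mathrm{BV}(\Omega;\RR^M)$. Indeed, each $u_n$ vanishes outside $(\partial\Omega)_{r_n}$, a set of vanishing Lebesgue measure, so H\"older's inequality and the embedding $\mathrm{BV}\hookrightarrow L^{N/(N-1)}$ (or $\mathrm{BV}\hookrightarrow L^\infty$ when $N=1$) together with the uniform $\mathrm{BV}$-bound give $u_n\to 0$ in $L^1$; since $(u_n)$ is bounded in $\mathrm{BV}$, this already forces $u_n\wstar 0$ in $\mathrm{BV}$ (every weak$^*$-cluster point of $(Du_n)$ must coincide with $D0=0$). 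Thus the assertion reduces to the single inequality $\liminf_{n\to\infty}F(u_n)\ge F(0)$, where $F(0)=\int_\Omega f(x,0)\,dx$ is a finite real number thanks to \eqref{f0}--\eqref{f1} and the boundedness of $\Omega$.

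The plan for this inequality is a contradiction argument of ``pumping'' type, exploiting the lower bound on $\cU$. Suppose $\liminf_n F(u_n)<F(0)$. As $u_n\in\cU$ and $F\ge\inf_{\cU}F>-\infty$, this $\liminf$ is finite, so after passing to a subsequence (not relabelled) we may assume $F(u_n)\to L<F(0)$ and, dropping finitely many terms, $F(u_n)\le F(0)-\rho$ for every $n$, with $\rho:=\tfrac12(F(0)-L)>0$. I would then construct inductively indices $n_1<n_2<\cdots$ and functions $w_K:=u_{n_1}+\cdots+u_{n_K}$ --- which belong to $\cU$ because $\cU$ is additively closed --- satisfying $F(w_K)\le F(0)-\tfrac{K}{2}\rho$ for all $K$. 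The case $K=1$ is the last display. For the inductive step, $w_K$ is now a \emph{fixed} function in $\mathrm{BV}$ and $(u_n)$ is bounded and purely concentrating on $\partial\Omega$, so Proposition~\ref{prop:asympaddconc} applies with $u:=w_K$ and gives $F(w_K+u_n)=F(w_K)+F(u_n)-F(0)+\eps_n$ with $\eps_n\to 0$; together with $F(w_K)\le F(0)-\tfrac{K}{2}\rho$ and $F(u_n)\le F(0)-\rho$ this yields $F(w_K+u_n)\le F(0)-\tfrac{K+2}{2}\rho+\eps_n$, so choosing $n_{K+1}>n_K$ with $\abs{\eps_{n_{K+1}}}\le\tfrac12\rho$ and putting $w_{K+1}:=w_K+u_{n_{K+1}}$ completes the induction. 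But then $F(w_K)\to-\infty$ with $(w_K)\subset\cU$, contradicting the fact that $F$ is bounded from below on $\cU$; hence $\liminf_n F(u_n)\ge F(0)$.

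I do not expect a real obstacle here: the substantive point --- the asymptotic additivity of $F$ along concentrations at the boundary --- is already packaged in Proposition~\ref{prop:asympaddconc}, and what remains is only to squeeze a contradiction out of the one-sided bound. The two places that call for a bit of care are bookkeeping rather than analysis: the error $\eps_n$ produced by Proposition~\ref{prop:asympaddconc} depends on the shift, but at each stage $w_K$ is frozen \emph{before} $n_{K+1}$ is chosen, so no circularity arises; and the iterated sums $w_K$ have to remain inside $\cU$, which is exactly what additive closedness provides. (The contradiction step does not in fact use the identification $u_n\wstar 0$ from the first paragraph: it deduces $\liminf_n F(u_n)\ge F(0)$ directly from the lower bound, the additive closedness, and Proposition~\ref{prop:asympaddconc}.)
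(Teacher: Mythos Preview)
Your argument is correct, but it takes a different route from the paper's. The paper gives a direct (non-contradiction) proof using an \emph{almost-minimizer}: for $\eps>0$ pick $u^*\in\cU$ with $F(u^*)\le I+\eps$, where $I:=\inf_{\cU}F$; then apply Proposition~\ref{prop:asympaddconc} once with $u=u^*$ to obtain
\[
\liminf_{n\to\infty}F(u_n)-F(0)=\liminf_{n\to\infty}F(u^*+u_n)-F(u^*)\ge I-(I+\eps)=-\eps,
\]
using that $u^*+u_n\in\cU$ by additive closedness. Letting $\eps\to 0$ finishes. Your ``pumping'' contradiction achieves the same end by iterating Proposition~\ref{prop:asympaddconc} along a tower of sums $w_K=u_{n_1}+\cdots+u_{n_K}\in\cU$ so as to force $F(w_K)\to-\infty$. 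Both proofs hinge on the same two ingredients --- asymptotic additivity (Proposition~\ref{prop:asympaddconc}) and additive closedness of $\cU$ --- and your bookkeeping (freezing $w_K$ before choosing $n_{K+1}$, ensuring $w_K\in\cU$) is sound. The paper's version is shorter and avoids the induction; your version is perhaps more self-contained in that it never names the infimum explicitly.

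One minor caveat on your opening paragraph: the embedding $\mathrm{BV}(\Omega)\hookrightarrow L^{N/(N-1)}(\Omega)$ you invoke to deduce $u_n\to 0$ in $L^1$ generally requires some regularity of $\Omega$ (e.g.\ an extension domain), which the Proposition does not assume. As you yourself note, however, this identification of the limit is not used in the contradiction step, so this does not affect the validity of the proof.
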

\begin{proof}
Similarly to the corresponding result in $W^{1,p}$ (cf. Proposition 3.3 in~\cite{Kroe10b}), the proof relies on finding an almost-minimizer in $\cU$.
Indeed, since $F$ is bounded from below, we may choose for every $\eps>0$ a $u^*\in \cU$ such that 
$$
	F(u^*)-\eps \leq I:=\inf\mysetb{F(u)}{u\in \cU}.
$$
By Proposition~\ref{prop:asympaddconc},
\[
	F(u^*+u_n)-F(u^*)-F(u_n)+F(0) = \big(F(u^*+u_n)-F(u_n)\big) + \big(F(0)-F(u^*)\big)\underset{n\to\infty}{\To} 0.
\]
By definition of $u^*$, we conclude that
$$	
	\liminf F(u_n)-F(0)=\liminf F(u^*+u_n)-F(u^*) \geq I-(I+\eps)=-\eps
$$
for every $\eps>0$.
\end{proof}
\begin{rem}
If $W^{1,1}$ is dense in 
$\cU$ 
with respect to area-strict convergence in $\mathrm{BV}$ ($\langle \cdot \rangle$-strict convergence in the notation of \cite{KriRi10a}), 
in particular if $\cU=\mathrm{BV}$, 
$u^*$ can be chosen in $W^{1,1}(\Omega;\RR^n)$ because $F$ is area-strictly-continuous \cite{RiSha13a} (see also \cite[Theorem 5]{KriRi10a} for a special case). In this case, $\chi_{S_n} \nabla u^*\to 0$ in $L^1$, and the proof of Proposition~\ref{prop:lscbndconc} still works even if we only assume that $\cL^N(S_n)\to 0$ (i.e. $(u_n)$ is purely concentrating, but not necessarily at the boundary).
\end{rem}

\begin{proof}[Proof of Theorem~\ref{thm:BVwlsc}]
 The proof is divided into two steps.

\noindent
{\it Step~1.} (Necessity): The necessity of condition (i) can be shown by taking e.g. non-concentrating sequences in $W^{1,p}(\Omega; \mathbb{R}^n)$ that have zero boundary conditions.  We show necessity of (ii). Assume, without loss of generality, that $f(x,0)=0$ for all $x\in\overline{\Omega}$ and that there is $x_0\in\partial\Omega$ such that $f(x_0,\cdot)$ is not of quasi-sublinear growth from below.  In view of Proposition~\ref{prop:qslb}, this means that 
$f^\infty(x_0,\cdot)$ is not of quasi-sublinear growth from below, which means that \eqref{qslb-unfrozeninfty} cannot be satisfied.
Consequently, we have some $\varepsilon>0$ such that for every $n\in\NN$ there is $v_n\in W^{1,1}(B_{1/n}(x_0)\cap\Omega;\RR^M)$ with $v_n=0$ near $\partial B_{1/n}(x_0)$ and 

\begin{align}\int_{\Omega\cap B_{1/n}(x_0)} f^\infty(x,\nabla v_n(x))\,dx
	<	-\eps \int_{\Omega\cap B_{1/n}(x_0)} \abs{\nabla v_n(x)}\,dx \ . 
\end{align}
In particular, $v_n\neq 0$, and we get for $u_n:=v_n/\| v_n\|_{W^{1,1}(B_{1/n}(x_0)\cap\Omega;\RR^{M})}$, extended by zero to the the rest of $\Omega$, that 
	$\| u_n\|_{W^{1,1}(\Omega;\RR^{M})} =1$ and that for all $n$
	\begin{align}\int_{\Omega\cap B_{1/n}(x_0)} f^\infty(x,\nabla u_n(x))\,dx=\int_{\Omega} f^\infty(x,\nabla u_n(x))\,dx<
	-\eps\ . \end{align}
	
	Since the support of $u_n$ shrinks to the point $x_0$, $u_n\stackrel{*}{\rightharpoonup} 0$ in $\mathrm{BV}(\Omega;\RR^M)$, while on the other hand, 
	$$\liminf_{n\to\infty}\int_{\Omega} f^\infty(x_0,\nabla u_n(x))\,dx \leq -\varepsilon< 0\ .$$
	
	Let us find $R>0$ so large that the function $\mu$ in Proposition~\ref{prop:finfty} satisfies 
	\[ 
		\mu(|\xi|)\sup_n(1+\|\nabla u_n\|_{L^1(\Omega;\RR^{M\times N})})= 2\mu(|\xi|)<\frac{\varepsilon}{4}
		~~\text{if $|\xi|>R$}. 
	\]	
	Further, put 
	$$S(n,R):= \overline{\{x\in\Omega\cap B_{1/n}(x_0)|\, |\nabla u_n(x)|\le R\}}\ .$$
	Clearly, $\mathcal{L}^N(S(n,R))\to 0$ as $n\to\infty$. Denote $m(R):=\max_{|A|\le R,x\in\overline{\Omega}}|f(x,A)-f^\infty(x,A)|$ and take $n$ so large that 
	$m(R)\mathcal{L}^N(S(n,R))\le \frac{\varepsilon}{4}$.	
	We get
	\begin{align*}
		&  \int_{B_{1/n}(x_0)\cap\Omega} |f(x,\nabla u_n(x))-f^\infty(x,\nabla u_n(x))|\,dx\\
		&\begin{aligned}[t]
		=~&\int_{S(n,R)}|f(x,\nabla u_n(x))-f^\infty(x,\nabla u_n(x))|\,dx\\
		&+ \int_{B_{1/n}(x_0)\cap\Omega\setminus S(n,R)} |f(x,\nabla u_n(x))-f^\infty(x,\nabla u_n(x))|\, d x\\
		\le ~& m(R)\mathcal{L}^N(S(n,R)) +2\mu(|\xi|)\le \frac{\varepsilon}{2}\ .
		\end{aligned}
	\end{align*}	
	This shows that 
	\[
		\liminf_{n\to\infty}\int_\Omega \big(f(x,\nabla u_n)-f(x,0)\big)\, dx
		=\liminf_{n\to\infty}\int_{B_{1/n}(x_0)\cap\Omega} f(x,\nabla u_n)
		\, dx\leq -\frac{\varepsilon}{2}<0,
	\]
	contradicting weak$^*$ lower semicontinuity of $F$.

\noindent
{\it Step~2.} (Sufficiency):
Let  $(u_n)_{n \in \mathbb{N}}\subset\mathrm{BV}(\Omega;\RR^M)$ be such that such that $u_n \wstar 0$ and let  $u \in \mathrm{BV}(\Omega; \mathbb{R}^M)$ be arbitrary.  We have to prove that 
\begin{equation}
F(u) \leq \liminf_{n \to \infty} F(u+u_n)
\label{prove-aim}
\end{equation}

Also, we may simplify the situation by extracting a subsequence of $(u_n)_{n \in \mathbb{N}}$ (not relabeled) that realizes the liminf in \eqref{prove-aim} so that we may assume that
$$\liminf_{n \to \infty} F(u+u_n)=\lim_{n \to \infty} F(u+u_n).$$ 
Then, to show \eqref{prove-aim}, it suffices to find a (not relabeled) subsequence of $(u_n)_{n \in \mathbb{N}}$ such that 
\begin{equation}
F(u) \le  \lim_{n \to \infty} F(u+u_{n}),
\label{prove-aimReduced}
\end{equation} which we will do in the sequel. 
Without mentioning this explicitly or relabeling, we keep choosing suitable subsequences below whenever necessary (however notice that we will do this finitely many times).

\begin{figure}[htbp] 
 \begin{minipage}{0.5\linewidth} 
  \centering 
  \includegraphics[width=0.9\linewidth]{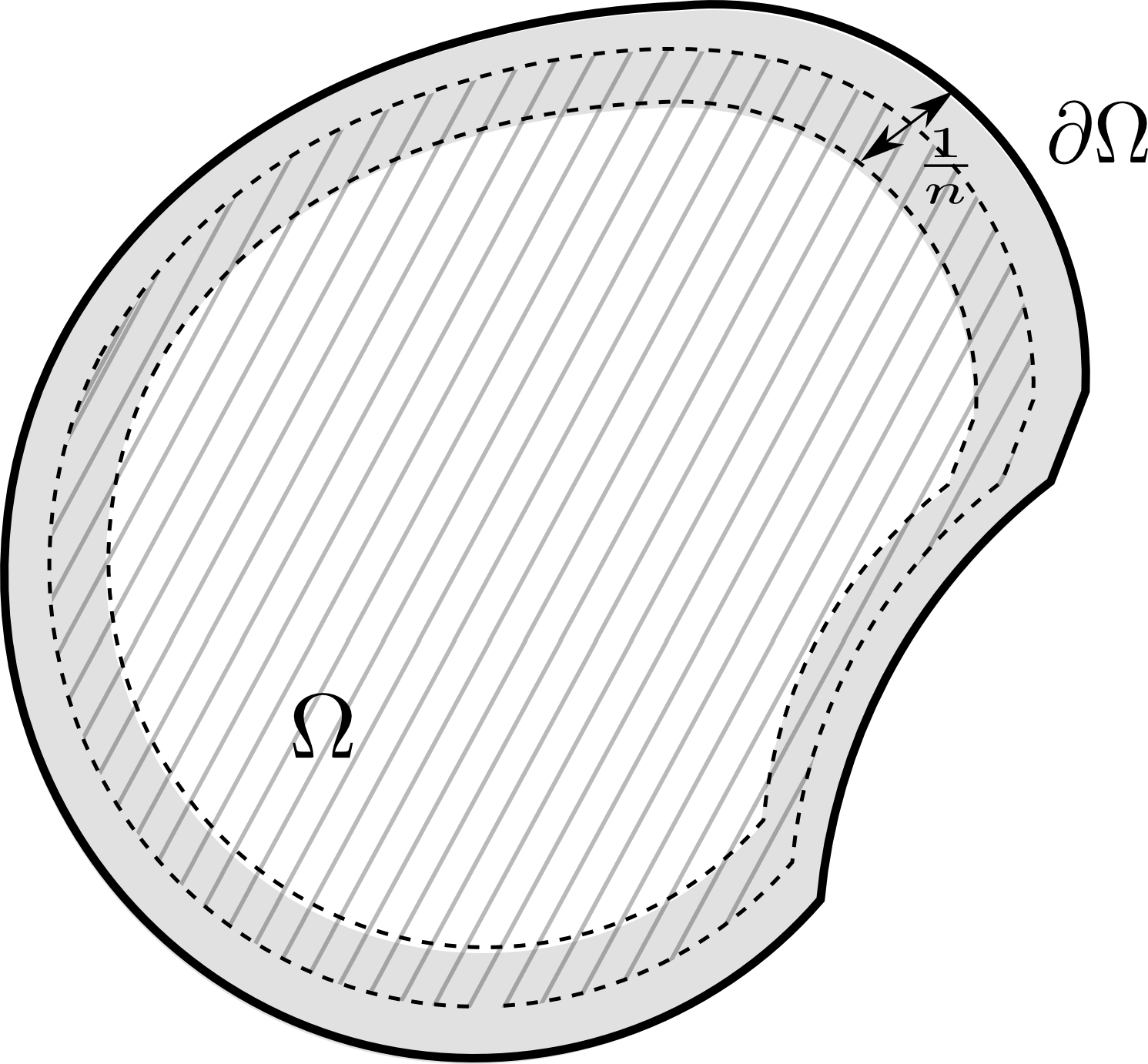} 
  \caption{An illustration of the support of the sequences obtained in (5.8). The support of $(c_n)$ is gray while the support of $(d_n)$ is hatched.} 
  \label{fig:Boundary-Inside} 
 \end{minipage}%
 \hspace*{2ex}
 \begin{minipage}{0.5\linewidth} 
  \centering 
  \includegraphics[width=\linewidth]{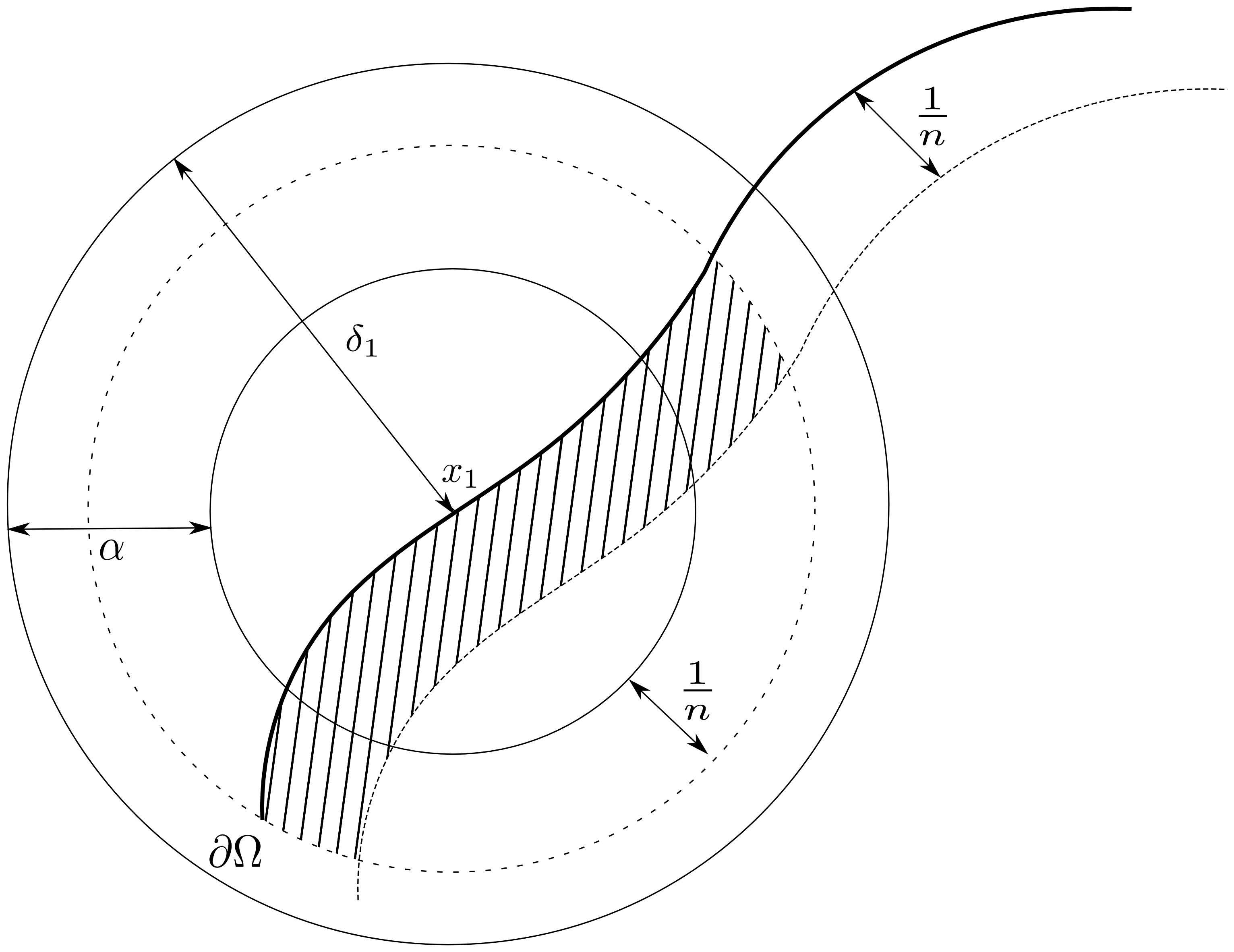} 
  \caption{An illustration of the support of the sequence $(c_{1,n})$ obtained in \eqref{Part-Boundary}. The support of this sequence is hatched.} 
  \label{fig:Boundary_Decom} 
 \end{minipage} 
\end{figure}

To show \eqref{prove-aimReduced}, we first ``separate'' the boundary and interior contributions of $(u_n)_{n \in \mathbb{N}}$. To this end, we use Lemma \ref{lem:decloc} applied to the two compact sets $\partial \Omega$ and $\overline{\Omega}$ an write that 
\begin{equation}
u_n = c_n + d_n
\label{Boundary-Inside_Decom}
\end{equation}
where $(c_n)_{n \in \mathbb{N}}$ is chosen such that $c_n \wstar 0$ in $\mathrm{BV}(\Omega; \mathbb{R}^M)$ and it is supported in $(\partial \Omega)_{\frac{1}{n}}$; i.e. it is a purely concentrating sequence on the boundary. The sequence $(d_n)_{n \in \mathbb{N}}$, on the other hand, is supported in the interior of $\Omega$, i.e. $d_n = 0$ near $\partial \Omega$, does not charge $\partial\Omega$, and also weakly$^*$-converges to $0$. For an illustration of the support of these two sequences we refer the reader to Figure \ref{fig:Boundary-Inside}.

Corollary~\ref{WLSC-Dir} yields that 
\begin{equation}
F(u) \le \lim_{n \to 0} F(u+d_n).
\label{Dn}
\end{equation}

Therefore, let us concentrate on the purely concentrating sequence on the boundary $(c_n)_{n \in \mathbb{N}}$. 
We now use quasi-sublinear growth from below in the form of \eqref{qslb-unfrozenBV}, which is equivalent to  Definition~\ref{def:qslb}
by Remark~\ref{rem:qslbBV} and Proposition~\ref{prop:qslb}.

For fixed $\eps>0$, we cover $\partial \Omega$ by the following collection of balls:
\begin{equation}
\partial \Omega \subset \bigcup_{x \in \partial \Omega} \bigcup_{\delta \leq \tilde{\delta}(x,\eps)} B_\delta (x),
\label{coverOm}
\end{equation}
where $\tilde{\delta}(x,\eps)$ is any such radius for which \eqref{qslb-unfrozenBV} holds; here we recall that if this condition holds with the ball of radius $\tilde{\delta}(x,\eps)$ it also holds for any ball of smaller radius.

Further, since $\partial \Omega$ is a compact we can chose from the cover in \eqref{coverOm} a finite subcover
$$
\partial \Omega \subset \bigcup_{j=1}^J B_{\delta_j} (x_j)
$$
with the radii bounded from below, i.e. $\delta_j \geq \delta_0$ for some $\delta_0 = \delta_0(\eps)$. In fact, since $B_{\delta_j} (x_j)$ are open and the collection is finite, we may still find $\alpha > 0$ so that balls of the radii $\delta_j - \alpha$ still cover $\partial \Omega$; i.e.
$$
\partial \Omega \subset \bigcup_{j=1}^J \overline{B_{\delta_j-\alpha} (x_j)}.
$$

Let us now apply the local decomposition Lemma \ref{lem:decloc} to the sequence $(c_n)_{n \in \mathbb{N}}$ with the compact sets
\begin{align*}
K_1 &= \overline{B_{\delta_1-\alpha} (x_1)} \cap \overline{\Omega}, \\
&\vdots \\
K_J &= \overline{B_{\delta_J-\alpha} (x_J)} \cap \bar{\Omega}, \\
K_{J+1} &= \overline{\Omega \setminus \bigcup_{j=1}^J \overline{B_{\delta_j-\alpha} (x_j)}};
\end{align*}
so we can write 
\begin{equation}
c_n = c_{1,n} + c_{2,n} + \ldots +c_{J+1,n},
\label{Part-Boundary}
\end{equation}
where $c_{j,n}$ are supported in $B_{\delta_j} (x_j)$ for $j=1\ldots J$ is  and $c_{J+1,n}$ is supported in $\Omega$. Notice that we need $n$ large enough compared to $\alpha$ and $\delta_0$ in order to fulfill these requirements; cf. also Figure \ref{fig:Boundary_Decom} for an illustration of the support of $c_{1,n}$. Moreover, $c_{1,n} \ldots c_{J,n}$ retain the property of the original sequence to be concentrating on the boundary while $c_{J+1,n}=0$ for large $n$,  and so
\begin{equation}
F(0) = \lim_{n \to \infty} F(c_{J+1,n}).
\label{Cj+1}
\end{equation}

Further, we define the auxiliary functionals
\begin{align*}
	&G_j(v):=\int_{\Omega\cap B_{\delta_j} (x_j)} 
	df(x,Dv) + \eps \abs{Dv} (\Omega\cap B_{\delta_j} (x_j)),\\
	&v\in \cU_j:=\Big\{v\in \mathrm{BV}(\Omega\cap B_{\delta_j} (x_j);\RR^n) \text{ with } v=0 \text{ near } \partial B_{\delta_j} (x_j)\Big\}.
\end{align*}
Each is bounded from below due to the given quasisublinear growth from below \eqref{qslb-unfrozenBV}. Therefore, they are lower semicontinuous along sequences purely concentrating on the boundary due to Proposition \ref{prop:lscbndconc}; in particular, $G_j$ is lower semicontinuous along $(c_{j,n})$ (note that indeed $(c_{j,n})$ vanishes near $\partial B_{\delta_j} (x_j)$). As a consequence, 
\begin{align*}
	\lim_{n \to \infty} F(c_{j,n})-F(0)&=
	\lim_{n \to \infty} G_j(c_{j,n})-G(0)-\eps \abs{D c_{j,n}} (\Omega\cap B_{\delta_j} (x_j)) \nonumber \\
	&\geq -\eps \lim_{n \to \infty} \abs{D c_{j,n}} (\Omega\cap B_{\delta_j} (x_j)).
\end{align*}
By \eqref{Cj+1} and Proposition \ref{prop:fdecloc} (which applies to $F$ as well as to $u\mapsto \abs{Du}$), the sum over $j$ yields that
\begin{align}
	\lim_{n \to \infty} F(c_n)-F(0)
	&\geq -\eps \lim_{n \to \infty} \abs{D c_{n}} (\Omega),
	\label{Cjn}
\end{align}
Due to Proposition \ref{prop:fdecloc}, Proposition~\ref{prop:asympaddconc}, \eqref{Dn} and \eqref{Cjn}, we get that
\begin{align*}
	&\lim_{n \to \infty} F(u+u_{n})-F(u)
	\begin{aligned}[t]
	&=\lim_{n\to\infty} [F(u+c_n)-F(u)]+[F(u+d_n)-F(u)]\\
	&=\lim_{n\to\infty} [F(c_n)-F(0)]+[F(u+d_n)-F(u)]\\
	&\ge \lim_{n\to\infty} [F(c_n)-F(0)]\\
	&\geq -\eps \limsup_{n\to\infty} \abs{D u_{n}} (\Omega),
	\end{aligned}
\end{align*}
which implies the assertion since $\eps$ was arbitrary.
\end{proof}

\appendix

\section{Proof of Proposition~\ref{prop:qslb}}


(a) We prove the series of equivalences in two steps; first, we show that 
\begin{equation}
\text{$f(x_0,\cdot)$ is qslb at $x_0~\Rightarrow$ \eqref{qslb-finfty}} \Rightarrow \text{$f^\infty(x_0,\cdot)$ is qslb at $x_0$} \Rightarrow \text{$f(x_0,\cdot)$ is qslb at $x_0$}
\label{seriesOfImpl}
\end{equation}
and, in the second step, we proove that $\eqref{qslb-finfty}~\Leftrightarrow~\eqref{qslb-unfrozen}$. 

As for the first implication in \eqref{seriesOfImpl}, we take  $t \geq 0$ and some $v\in W^{1,1}(\Omega\cap B_\delta(x_0);\RR^M)$ with $v=0$ near $\partial B_\delta(x_0)$ so that the quasi-sublinear growth from below of $f(x_0,\cdot)$ implies
$$
\bald
	0 \leq &\frac{1}{t}\left(\int_{B_\delta(x_0)\cap\Omega} f(x_0, t \nabla v(x))+\eps\abs{t\nabla v(x)}\,dx+C\right) \\
	& \qquad \underset{n\to\infty}{\To} \int_{B_\delta(x_0)\cap\Omega} f^\infty(x_0, \nabla v(x))+\eps\abs{\nabla v(x)}\,dx,
\eald
$$
where the limit passage is due to Proposition~\ref{prop:finfty}. The second implication in \eqref{seriesOfImpl} is trivial. The third follows again from Proposition~\ref{prop:finfty}: for some arbitrary $\eps>0$, we fix $h_\eps\geq 0$ (according to this proposition) such that
$$
	\abs{f(x_0,\xi)-f^\infty(x_0,\xi)}\leq \frac{\eps}{2}(1+\abs{\xi})\quad\text{for every $\xi\in \RR^{M\times N}$ with $\abs{\xi}\geq h_\eps$},
$$
Then, we infer that
\[
\bald
	&\int_{B_\delta(x_0)\cap\Omega} f^\infty(x_0, \nabla v(x))\,dx = \int_{B_\delta(x_0)\cap\Omega \cap \{x\in \Omega \,|\, \nabla v(x) > h_\eps\}} \!\!\!\!\!\!\!\!\!\!\!\!\!\!\!\!\!\!\!\!\!\!\!\!\!\!\!\!\!\!\!\!\!\!\!\!f^\infty(x_0, \nabla v(x))\,dx + \int_{B_\delta(x_0)\cap\Omega \cap \{x\in \Omega \,|\, \nabla v(x) \leq h_\eps\}} \!\!\!\!\!\!\!\!\!\!\!\!\!\!\!\!\!\!\!\!\!\!\!\!\!\!\!\!\!\!\!\!\!\!\!\! f^\infty(x_0, \nabla v(x))\,dx \\
	&\leq 
	\int_{B_\delta(x_0)\cap\Omega} \big(f(x_0, \nabla v(x))+\frac{\eps}{2}\abs{\nabla v(x)}+\frac{\eps}{2}\big)\,dx+\abs{B_\delta(x_0)\cap\Omega} \max_{\abs{\xi}\leq h_\eps} f^\infty(x_0, \xi).
\eald
\]
Hence, the integral inequality in Definition~\ref{def:qslb} for $g=f^\infty(x_0, \cdot)$ implies that
\[
\bald
	0&\leq 
	\int_{B_\delta(x_0)\cap\Omega} \big(f(x_0, \nabla v(x))+\eps\abs{\nabla v(x)}\big)\,dx+C_1,
\eald
\]
where $C_1:=C+\abs{B_\delta(x_0)\cap\Omega} (\eps+\max_{\abs{\xi}\leq h_\eps} f(x_0, \xi))$.

As for the second sted (the equivalence $\eqref{qslb-finfty}~\Leftrightarrow~\eqref{qslb-unfrozen}$), we first proceed similarly as in the first step to realize that \eqref{qslb-unfrozen} holds if and only if
\be \label{qslb-unfrozeninfty}
\bald
	&\bald[t]
	&\text{for every $\eps>0$, there exists $\delta>0$ such that}\\
	&\int_{\Omega\cap B_\delta(x_0)} f^\infty(x,\nabla v(x))\,dx
	~\geq~
	-\eps \int_{\Omega\cap B_\delta(x_0)} \abs{\nabla v(x)}\,dx 
	\eald &\\
	&\text{for every $v\in W^{1,1}(\Omega\cap B_\delta(x_0);\RR^M)$ with $v=0$ near $\partial B_\delta(x_0)$}.
\eald
\ee
The only difference between \eqref{qslb-unfrozeninfty} and \eqref{qslb-finfty} is that
the first variable of $f^\infty$ is ``frozen'' to $x_0$ in \eqref{qslb-finfty}. 
Moreover, w.l.o.g., $\delta$ can be chosen arbitrarily small in both conditions. Hence,
it suffices to 
show that $f^\infty(x,\cdot)$ can be replaced by $f^\infty(x_0,\cdot)$ with negligible error for $x$ sufficiently close to $x_0$;
more precisely, we want that
for every $\gamma>0$ (say, $\gamma=\frac{\eps}{2}$), there exists $\delta$ such that
\be \label{p:qslb-finftycont}
		\abs{f^\infty(x,\xi)-f^\infty(x_0,\xi)}\leq \gamma\abs{\xi}\quad\text{for every $(x,\xi)\in \Omega\cap B_\delta(x_0)\times \RR^{M\times N}$.}
\ee
This clearly holds since $f^\infty$ is positively $1$-homogeneous in its second variable and uniformly continuous on 
the compact set $\overline{\Omega}\times S^{MN-1}$.

(b) It is well known(see Remark 2.2 (ii) in \cite{FoMue93a}) that quasiconvexity of $f(x_0, \cdot)$ at zero implies the same for the recession function. 


Moreover, for interior points it is easy to see that if 
$f^\infty(x_0,\cdot)$ is quasiconvex at $0$, it also satisfies \eqref{qslb-finfty} with any $\delta$ such that $B_\delta(x_0)\subset \Omega$ (even for $\eps=0$). By (a), this implies that $f^\infty(x_0,\cdot)$ is qslb at $x_0$. To see the converse, we start from \eqref{qslb-finfty} with $v\in W^{1,1}(B_{1/\delta}(x_0);\RR^M)$ extended by zero to all of $\RR^N$. We then have that
$$
\int_{B_\delta(x_0)} f^\infty(x_0,\nabla v(x))\,dx
	~\geq~
	-\eps \int_{B_\delta(x_0)} \abs{\nabla v(x)}\,dx.
$$
 Take $\eta\in W_0^{1,1}(B_1(0);\RR^M)$ extended by zero to the full space and define $v(x):=\delta^{N-1}\eta(\delta(x+x_0))$. Then $v\in W^{1,1}(B_{1/\delta}(x_0);\RR^M)$  and by the change of variables and by $1$-homogeneity of $f^\infty$
$$
	\int_{ B_{1}(0)} \big(f^\infty(x_0,\nabla \eta(z))+\eps \abs{\nabla \eta(z)}\big) \,dz~\geq~  0 \ ,
$$
which, by setting $\eps \to 0$, yields the quasiconvexity at $0$.

(c) We first show that $f^\infty(x_0,\cdot)$ is of quasi-sublinear growth from below at $x_0$~~$\Leftrightarrow$~~\eqref{qslb-limit1} with $g:=f^\infty(x_0,\cdot)$ (for $x_0\in\partial\Omega$). Due to (a), it suffices to show the equivalence of
\eqref{qslb-finfty} and \eqref{qslb-limit1} with $g:=f^\infty(x_0,\cdot)$. Essentially, this is based on a change of variables argument. First, we blow up $B_\delta$ to a ball of unit size. The blown up (and translated) set $\frac{1}{\delta}[-x_0+B_\delta(x_0)\cap \Omega]$, in a sense, converges to a half-ball as $\delta\to 0$. This made precise by flattening the boundary near $x_0$. The argument follows the one given in \cite{Kroe10b} and is even slightly simpler  since we may exploit $1$-homogeneity of $f^\infty(x_0,\cdot)$. 

Suppose that \eqref{qslb-finfty} holds and fix some $\eps>0$ as well as the associated $\delta=\delta(\eps)>0$. Take some arbitrary $0<r\leq \delta$ and
$v\in W^{1,1}(\Omega\cap B_r(x_0);\RR^M)$ with $v=0$ near $\partial B_r(x_0)$;
a change of variables exploiting the $1$-homogeneity of $f^\infty$
gives that
\be\label{p:qslb-21}
\bald
	\int_{\frac{1}{r}(\Omega-x_0)\cap B_{1}(0)} \big(f^\infty(x_0,\nabla \eta(z))+\eps \abs{\nabla \eta(z)}\big) \,dz
	~\geq~
	0,& \\
	\text{where $\eta(z)=r^{N-1} v\Big(rz+x_0\Big)$.}
\eald
\ee
Moreover, since $\partial\Omega$ is of class $C^1$ near $x_0$, whenever $r$ is small enough, there is a $C^{1}$-diffeomorphism $\Psi_r: \overline{B}_1(0)\to \overline{B}_1(0)$ such that $\Psi(0)=0$,
$\Psi_r(D_{x_0})=\frac{1}{r}(\Omega-x_0)\cap B_1(0)$ and
\be\label{p:qslb-Psiconv}
	\text{$\Psi_r\to \operatorname{id}$ and $\Psi^{-1}_r\to \operatorname{id}$ (the identity) 
	in $C^1(\bar{B}_1(0);\RR^N)$ as $r\to 0^+$.} 
\ee
Changing variables once more to $y=\Psi_r^{-1}(z)$,
we infer that
\be\label{p:qslb-22}
\bald
	\int_{D_{x_0}} \big(f^\infty(x_0,\nabla \varphi(y) (\nabla\Psi_r(y))^{-1})
	+\eps \abs{\nabla \varphi(y) (\nabla\Psi_r(y))^{-1}}\big)
	\abs{\det \nabla\Psi_r(y)}\,dy
	~\geq~
	0,&\\
	\text{where $\varphi=\eta\circ \Psi_r$.}&
\eald
\ee
By \eqref{p:qslb-Psiconv}, using uniform continuity of $f^\infty(x_0,\cdot)$ on the sphere and $1$-homogeneity as in \eqref{p:qslb-finftycont}, 
for each $r$ sufficiently small (independently of $y$ and $\varphi$), we have that
\[
	\abs{f^\infty(x_0,\nabla \varphi(y)(\nabla\Psi_r(y))^{-1})-f^\infty(x_0,\nabla \varphi(y))}\leq \eps \abs{\nabla\varphi(y)},
\]
and analogously
\[
	\absb{\eps\abs{\nabla \varphi(y) (\nabla\Psi_r(y))^{-1}}-\eps\abs{\nabla \varphi(y)}}\leq \eps\abs{\nabla\varphi(y)}.
\]
Plugging this into \eqref{p:qslb-22}, we conclude that
\be\label{p:qslb-23}
\bald
	&\int_{D_{x_0}} \big(f^\infty(x_0,\nabla \varphi(y))
	+3\eps \abs{\nabla \varphi(y)}\big)
	\abs{\det \nabla\Psi_r(y)}\,dy
	~\geq~
	0.
\eald
\ee
Finally, we use that $\det \nabla\Psi_r(y)\to 1$ as $r\to 0$ uniformly in $y$. Due to the linear growth of $f^\infty$, this implies that
for $r$ small enough (independently of $y$ and $\varphi$),
\[
	\abs{(\abs{\det \nabla\Psi_r(y)}-1) f^\infty(x_0,\nabla \varphi(y))} \leq \eps \abs{\nabla \varphi(y)}
\]
and
\[
	\abs{3(\abs{\det \nabla\Psi_r(y)}-1) } \leq 1.
\]
Consequently, 
\be \label{p:qslb-24}
\bald
	&\int_{D_{x_0}} \big(f^\infty(x_0,\nabla \varphi(y))
	+5\eps \abs{\nabla \varphi(y)}\big)
	~\geq~
	0,
\eald
\ee
i.e., the estimate in \eqref{qslb-limit1} with $g:=f^\infty(x_0,\cdot)$ holds
(with $5\eps$ in place of $\eps$, but of course, $\eps>0$ is arbitrary).
Also note that for each 
$\varphi\in W^{1,1}(B_1(0);\RR^M)$ with compact support in $B_1(0)$ (a dense subclass of $W_0^{1,1}(B_1(0);\RR^M)$), the associated function $v$ is given by
$$
	v(x)=r^{1-N}\varphi\Big(\Psi_r^{-1}\Big(\frac{x-x_0}{r}\Big)\Big),
$$
which is admissible in \eqref{qslb-finfty}. Hence, \eqref{qslb-finfty} implies \eqref{qslb-limit1} with $g=f^\infty(x_0,\cdot)$. 

For the converse, first observe that again due to $1$-homogeneity, \eqref{qslb-limit1} with $g=f^\infty(x_0,\cdot)$ has to hold with $C=0$. The rest of the argument essentially amounts to retracing the steps of the calculation above; we omit the details.

\eqref{qslb-limit1} with $g:=f^\infty(x_0,\cdot)$~~$\Leftrightarrow$~~\eqref{qslb-limit2} (for $x_0\in\partial\Omega$): 
We only have to justify that $C=\eps=0$ is admissible in \eqref{qslb-limit1}. As already mentioned above, and similarly in the proof of (a), for each $\eps$, the inequality in \eqref{qslb-limit1} with $g:=f^\infty(x_0,\cdot)$
can only be true for all test functions if it holds with $C=0$, since both $f^\infty(x_0,\cdot)$ and the modulus are positively $1$-homogeneous. Once $C$ is gone, one can pass to the limit as $\eps\to 0$.

\smallskip
\subsection*{Acknowledgments:}  The support by CZ01-DE03/2013-2014/DAAD-56269992 (PPP program) is acknowledged. Moreover,  BB and MK were partly supported by grants GA\v{C}R P201/10/0357 and\\
P107/12/0121, respectively. 

\bibliographystyle{plain}
\bibliography{13_bib}
\end{document}